  \theoremstyle{plain}
  \newtheorem{thm}{Theorem}[section]
  \newtheorem{prop}[thm]{Proposition}
  \newtheorem{cor}[thm]{Corollary}
  \newtheorem{lem}[thm]{Lemma}
  \newtheorem{algo}[thm]{Algorithm}
  \theoremstyle{definition}
  \newtheorem{defn}[thm]{Definition}
  \newtheorem{rem}[thm]{Remark}
  \newcommand{\step}[1]{\par\medskip\par\noindent\textit{#1}} 
  \def \dS{\frac12\left(|\nabla\chi_i| +|\nabla\chi_j| - |\nabla(\chi_i+\chi_j)| \right)}
  \def \dSnull{\frac12\left(|\nabla\chi_i^0| +|\nabla\chi_j^0| - |\nabla(\chi_i^0+\chi_j^0)| \right)}
  \def \h{\sqrt{h}}
  \def \H{\mathscr{H}}
  \def \N{\mathbb{N}}
  \def \R{\mathbb{R}}
  \def \torus{{[0,\Lambda)^d}}
  \def \numphases{P}
  \def \chara{\mathbf{1}}
  \newcommand{\lauxbrakkedist}{ \mathrm{d}}
  \newcommand {\dist} {\mathop \mathrm{dist}}
  \DeclareMathOperator*{\esssup}{ess\,sup}
  \title{Brakke's inequality for the thresholding scheme}
 \author{Tim Laux\footnote{Department of Mathematics, University of California, Berkeley, CA 94720-3840 USA. Please use {tim.laux@math.berkeley.edu} for correspondence.} \and Felix Otto\footnote{Max-Planck-Institut f\"ur Mathematik in den Naturwissenschaften, Inselstra{\ss}e 22, 04103 Leipzig, Germany.}}
  \date{}
\begin{document}

    \maketitle
  \begin{abstract}
    We continue our analysis of the thresholding scheme from the variational viewpoint and prove a conditional convergence
    result towards Brakke's notion of mean curvature flow.
    Our proof is based on a localized version of the minimizing movements interpretation of Esedo\u{g}lu and the second author.
    We apply De Giorgi's variational interpolation to the thresholding scheme and pass to the limit in the resulting energy-dissipation inequality.
    The result is conditional in the sense that we assume the time-integrated energies of the approximations
    to converge to those of the limit.
    \medskip

    \noindent \textbf{Keywords:} Mean curvature flow, thresholding, MBO, diffusion generated motion

    \medskip

    \noindent \textbf{Mathematical Subject Classification:} 35A15, 65M12, 74N20
    \end{abstract}

  \section{Introduction}
  
  The thresholding scheme is a time discretization for mean curvature flow.
  Its structural simplicity is intriguing to both applied and theoretical scientists.
  Merriman, Bence and Osher \cite{MBO92} introduced the algorithm in 1992 to overcome the numerical difficulty 
  of multiple scales in phase-field models.
  Their idea is based on an operator splitting for the Allen-Cahn equation, alternating between linear diffusion and thresholding.
  The latter replaces the fast reaction coming from the nonlinearity, i.e., the reaction-term, in the Allen-Cahn equation.
  We refer to Algorithm \ref{MBO} below for a precise description of the scheme in the multi-phase setting.
  The convolution can be implemented efficiently on a uniform grid using the Fast Fourier Transform and the thresholding step is a simple pointwise operation.
  Because of its simplicity and efficiency, thresholding received a lot of attention in the last decades.
  Large-scale simulations \cite{elsey2009diffusion,elsey2011large,elsey2011large2} demonstrate the efficiency of a slight modification of the scheme.
  For applications in materials science and image segmentation it is desirable to design algorithms that are efficient enough to handle large numbers of phases
  but flexible enough to incorporate external forces, grain-dependent and even anisotropic surface energies.
  Not long ago, the natural extension to the multi-phase case \cite{MBO94} was generalized to arbitrary surface tensions by Esedo\u{g}lu and the second author \cite{EseOtt14}. 
  In this paper, it was realized that thresholding preserves the gradient-flow structure of (multi-phase) mean-curvature flow
  in the sense that it can be viewed as a minimizing movements scheme for an energy that $\Gamma$-converges to the total interfacial area.
  This viewpoint allowed to incorporate a wide class of surface tensions including the well-known Read-Shockley formula for small-angle grain boundaries \cite{read1950dislocation}.
  
  \medskip
  
  The development of thresholding schemes for anisotropic motions started with the work \cite{ishii1999threshold} of Ishii, Pires and Souganidis.
  Efficient schemes were introduced by Bonnetier, Bretin and Chambolle \cite{bonnetier2012consistency}, where the convolution kernels are explicit and well-behaved in Fourier space but not necessarily in real space.
  The recent work \cite{elsey2016threshold} of Elsey and Esedo\u{g}lu is inspired by the variational viewpoint \cite{EseOtt14} and shows 
  that not all anisotropies can be obtained when structural features such as
  positivity of the kernel are required. However, variants of the scheme developed by Esedo\u{g}lu and Jacobs \cite{esedoglu2016convolution} share
  the same stability conditions even for more general kernels.  
  
  \medskip
  
  The rigorous asymptotic analysis of thresholding schemes started with the independent convergence proofs of
  Evans \cite{evans1993convergence} and Barles and Georgelin \cite{barles1995simple} in the isotropic two-phase case.
  Since the scheme preserves the geometric comparison principle of mean curvature flow,
  they were able to prove convergence towards the viscosity solution of mean curvature flow.
  Recently, Swartz and Yip \cite{SwaYip15} proved convergence for a smooth evolution by establishing consistency and stability of the scheme, very much in the flavor of classical numerical analysis.
  They prove explicit bounds on the curvature and injectivity radius of the approximations and get a good understanding of the transition layer.
  However, also their result does not generalize to the multi-phase case immediately.
  In our previous work \cite{laux2015convergence} we established the convergence of thresholding to a distributional formulation of multi-phase mean-curvature flow based on the assumption of convergence of the energies. In \cite{LauSwa15}, Swartz and the first author applied these techniques to the case of volume-preserving mean-curvature flow and other variants.
  
  \medskip

  Since the works \cite{barles1995simple, evans1993convergence} are based on the comparison principle, the proofs do not apply in the multi-phase case.
  Our guiding principle in this work is instead the gradient-flow structure of (multi-phase) mean curvature flow.
  In general, a gradient-flow structure is given by an energy functional and a metric tensor, which endows the configuration space with a Riemannian structure that encodes the dissipation mechanism.
  A simple computation reveals this structure for mean curvature flow.
  If the hypersurface $\Sigma = \Sigma(t)$ evolves smoothly by its mean curvature (here and throughout we use the time scale such that $2V=H$) the change of area is given by
  \begin{equation}\label{dt E}
   2\frac{d}{dt} |\Sigma| = -\int_\Sigma 2V \cdot H = - \int_{\Sigma} |H|^2,
  \end{equation}
  where $V$ denotes the normal velocity vector and $H$ denotes the mean curvature vector of $\Sigma$.
  Although \eqref{dt E} does not characterize the mean curvature flow one can read off the metric tensor, the $L^2$-metric $\int_\Sigma |V|^2$ on the space of normal vector fields, when fixing the energy to be the surface area.
  However, some care needs to be taken when dealing with this metric as for example the geodesic distance vanishes identically \cite{michor2004riemannian}.
  The implicit time discretization developed  by Almgren, Taylor and Wang \cite{ATW93} and Luckhaus and Sturzenhecker \cite{LucStu95} makes use of this gradient-flow structure.
  In fact, it inspired De Giorgi to define a similar implicit time discretization for abstract gradient flows which he named ``minimizing movements''.
  His abstract scheme consists of a family of minimization problems that mimic the principle of a gradient flow moving in direction of the steepest descent in an energy landscape.
  The configuration $\Sigma^n$ at time step $n$ is obtained from its predecessor $\Sigma^{n-1}$ by minimizing 
  $
   E(\Sigma) + \frac{1}{2h}\dist^2(\Sigma,\Sigma^{n-1}),
  $
  where $\dist$ denotes the geodesic distance induced by the Riemannian structure and $h>0$ denotes the time-step size. 
  In the case of a Euclidean configuration space, the scheme boils down to the implicit Euler scheme.
  In its Riemannian version, it has been used for applications in partial differential equations and for instance allowed Jordan, Kinderlehrer and the second author \cite{jordan1998variational}
  to interpret diffusion equations as gradient flows for the entropy w.r.t.\  the Wasserstein distance.  
  In view of the degeneracy in the case of mean curvature flow it is evident that the scheme in \cite{ATW93, LucStu95} uses a proxy for the geodesic distance.
The replacement for the distance of two boundaries $\Sigma=\partial \Omega $ and $ \tilde \Sigma =\partial  \tilde \Omega$ is the (non-symmetric) quantity 
  $4\int_{\Omega \Delta \tilde \Omega} d_{\tilde \Omega}\,dx $,
  where $d_{\tilde \Omega}$ denotes the (unsigned) distance to $ \partial \tilde\Omega$.
  Chambolle \cite{chambolle2007approximation} showed that the scheme \cite{ATW93, LucStu95} which seems academic at a first glance can be implemented rather efficiently. Recently, Bellettini and Kholmatov \cite{bellettini2017minimizing} analyzed the scheme in the multi-phase case. However, neither a conditional convergence result to a distributional BV-solution, nor one to a Brakke flow are available yet.
  
   \medskip
 
 Also Brakke's pioneering work \cite{brakke1978motion} is inspired by the gradient-flow structure of mean curvature flow.
  His definition is similar to the one of an abstract gradient flow and characterizes solutions by the optimal dissipation of energy.  
  Brakke measures the dissipation of energy only in terms of the mean curvature.
 As \eqref{dt E} cannot characterize the solution, Brakke monitors localized versions of the surface area, which leads to a sensible notion of solution; we refer to Definition \ref{def brakke flow bv} for a precise definition in our context of sets of finite perimeter.
  Ilmanen \cite{ilmanen1993convergence} used a phase-field version of Huisken's monotonicity formula \cite{huisken1990asymptotic}
  to prove the convergence of solutions to the scalar Allen-Cahn equation to Brakke's mean curvature flow.
  Extending his proof to the multi-phase case is a challenging open problem. 
  Only recently, Simon and the first author \cite{LauxSimon} proved a conditional convergence result for the vector-valued Allen-Cahn equation very much in the spirit of \cite{LucStu95, laux2015convergence}. However, an unconditional result is not yet available.
  Even the construction of non-trivial global solutions to multi-phase mean-curvature flow has only been achieved recently
  by Tonegawa and Kim \cite{kim2015mean}.
    \medskip
    
    In the present work we establish the convergence of the thresholding scheme to Brakke's motion by mean curvature. 
    As our previous result \cite{laux2015convergence}, also this one is only a conditional convergence result in the sense that we assume the 
    time-integrated energies to converge to those of the limit.
    Our proof is based on the observation that thresholding does not only have a \emph{global} minimizing movements interpretation, but indeed solves a \emph{family} of \emph{localized} minimization problems.
    In Section \ref{sec:brakke} we state our main results, in particular Theorem \ref{thm brakke inequality}.
    We use De Giorgi's variational interpolation for these localized minimization problems to derive an \emph{exact} 
    energy-dissipation relation and pass to the limit in the inequality with help of our strengthened convergence.
    We first recall the known results from the abstract framework of gradient flows in metric spaces (cf.\ Chapter 3 in \cite{ambrosio2006gradient}).
    Then we pass to the limit $h\to0$ in these terms with help of our strengthened convergence.
   It is worth pointing out that such a result is not known for the time discretization scheme \cite{ATW93, LucStu95}.
  \medskip
    
    The starting point for our analysis of thresholding schemes is the minimizing movements interpretation of Esedo\u{g}lu and the second author \cite{EseOtt14}. 
    Let us explain this interpretation with help of the example of the two-phase scheme. The combination  $\chi^n = \chara_{\{ G_h\ast \chi^{n-1} > \frac12\}}$ of convolution and thresholding is equivalent to 
    minimizing
    $ E_h(\chi) + \frac1{2h} \lauxbrakkedist_h^2(\chi,\chi^{n-1}),$
    where $E_h$ is an approximation of the perimeter functional and $\lauxbrakkedist_h$ is a metric.
    The latter serves as a proxy for the induced distance, just like $4 \int_{\Omega \Delta \Omega^{n-1}} d_{\Omega^{n-1}} dx$ in the minimizing movements scheme of Almgren, Taylor and Wang \cite{ATW93},
    and Luckhaus and Sturzenhecker \cite{LucStu95}.
    The $\Gamma$-convergence of similar functionals has been developed some time ago by Alberti and Bellettini \cite{alberti1998nonlocal} and more recently
    by Ambrosio, De Philippis and Martinazzi \cite{ambrosio2011gamma}, and was proven for the functionals $E_h$ by Miranda, Pallara, Paronetto and Preunkert \cite{miranda2007short}.
    Esedo\u{g}lu and the second author found a simpler proof in the case of the energies $E_h$, which extends to the multi-phase case.
    

  \medskip
    Let us recall the thresholding scheme and the basic notation.
    
   \begin{algo}\label{MBO}
  Given the partition $\Omega_1^{n-1},\dots,\Omega_P^{n-1}$ at time $t=(n-1)h$, 
  obtain the partition $\Omega_1^{n},\dots,\Omega_P^{n}$ at time $t=nh$ by the following two operations:
  \begin{enumerate}
  \vspace{-3pt}
  \item Convolution step:
  $
  \phi_i := G_h\ast \left( \sum_{j\neq i}\sigma_{ij} \chara_{\Omega_j^{n-1}} \right).
  $
    \item Thresholding step:
    $
    \Omega_i^n := \left\{ \phi_i < \phi_j \text{ for all } j\neq i\right\}.
    $
  \end{enumerate}
  \end{algo}
  
   Here and throughout the paper
  \begin{align*}
	  G_h(z) := \frac{1}{(2\pi h)^{d/2}} \exp\left(-\frac{|z|^2}{2h}\right)
  \end{align*}
  denotes the centered Gaussian of variance $h$, which we also think of as the heat kernel at time $\frac h2$.
  We assume the matrix of surface tensions $\sigma = (\sigma_{ij})_{i,j}$ to satisfy the obvious relations
  \[
  \sigma_{ij} = \sigma_{ji} >0 \; \text{for } i\neq j,\quad \sigma_{ii}=0
  \]
  and the usual (strict) triangle inequality
  \[
   \sigma_{ij} < \sigma_{ik}+\sigma_{kj}\quad \text{for all pairwise different } i,j,k.
   \]
   Furthermore, we  ask the matrix $\sigma$ to be conditionally negative definite
   \begin{equation}\label{sigma<0}
   \sigma <0 \quad \text{as a bilinear form on } \left(1,\dots,1\right)^\perp.
   \end{equation}
   This condition can be simply spelled out as $\xi \cdot \sigma \xi \leq -\underline{\sigma} |\xi|^2 $ for all $\xi \in \R^P$ such that $\sum_{i=1}^P \xi_i=0$,  where $\underline \sigma>0$ is a positive constant. The condition was introduced by Esedo\u{g}lu and the second author \cite{EseOtt14} and guarantees the dissipation of energy. Indeed, the conditional negativity \eqref{sigma<0} ensures that
   \[
   \left|\xi \right|_\sigma^2 := - \xi \cdot \sigma \xi = -\sum_{i,j} \sigma_{ij} \xi_i \xi_j, \quad \text{ for } \xi \in \R^P \text{ s.t.\ }\sum_i \xi_i=0
   \]
   defines a norm $\left| \mathop{\cdot} \right|_\sigma$ on the space $(1,\dots,1)^\perp$.
  For convenience we will work with periodic boundary conditions, i.e., on the flat torus $\torus$.
  We write $\int dx$ short for $\int_{\torus}dx$ and $\int dz$ short for $\int_{\R^d} dz$.
  Furthermore, $\chi^n$ given by $\chi_i^n:= \chara_{\Omega^n}$, $i=1,\dots,P$, denotes the vector of characteristic functions of the phases $\Omega_i^n$ at time step $n$ and we denote its piecewise constant interpolation by
  \[
   \chi^h(t) := \chi^n = \left( \chara_{\Omega_1^n},\dots, \chara_{\Omega_P^n} \right)\quad \text{for }t\in[nh,(n+1)h).
  \]
  However, we will mostly use a nonlinear interpolation which will be introduced later.
    Selim Esedo\u{g}lu and the second author \cite{EseOtt14} showed that thresholding preserves the gradient-flow structure of (multi-phase) mean curvature flow in the sense that it
    can be viewed as a minimizing movements scheme 
    \begin{equation}\label{MM interpretation}
    \chi^n = \arg \min_u \left\{ E_h(u) +  \frac1{2h} \lauxbrakkedist_h^2(u, \chi^{n-1})\right\},
    \end{equation}
    where the minimum runs over all measurable $u \colon \torus \to \R^P$ such that $0\leq u_i\leq 1$, $i=1,\dots,P$ and $\sum_i u_i = 1$ a.e..
    Here the dissipation functional
    \begin{equation}\label{def diss}
      \frac{1}{2h}\lauxbrakkedist^2_h(u,\chi) 
      := \frac1\h \int  \left|G_{h/2}\ast  \left( u-\chi\right)\right|_\sigma^2 dx
        =  -\frac1\h \int  G_{h/2}\ast  \left( u-\chi\right)
      \cdot \sigma\,G_{h/2}\ast  \left( u-\chi\right)dx
    \end{equation}
    is, because of \eqref{sigma<0}, the square of a metric
    and the energy
    \begin{equation}\label{def Eh}
     E_h(u) :=  \frac1\h \int u \cdot \sigma\, G_h\ast u \,dx 
    \end{equation}
    is an approximation of the total interfacial area.  Indeed,  this functional $\Gamma$-converges to the energy
    \[
     E(\chi) := c_0 \sum_{i,j} \sigma_{ij} \int \frac12 \left( \left|\nabla \chi_i\right| +\left|\nabla \chi_j\right| -\left|\nabla (\chi_i+\chi_j)\right| \right),
    \]
    defined for partitions $ \chi \colon \torus \to \{0,1\}^P$ s.t.\ $\sum_i \chi_i =1$.
    Writing $\Omega_i = \{\chi_i=1\}$ and $\partial^\ast \Omega_i$ for the reduced boundary of $\Omega_i$, the term
    \[
    \int \frac12 \left( \left|\nabla \chi_i\right| +\left|\nabla \chi_j\right| -\left|\nabla (\chi_i+\chi_j)\right| \right) = \H^{d-1}(\partial^\ast \Omega_i \cap \partial^\ast \Omega_j),
    \]
    is the measure of the interface between Phases $i$ and $j$,
    so that the energy $E$ is indeed the total interfacial area
    \[
     E(\chi) =c_0 \sum_{i,j} \sigma_{ij} \H^{d-1}(\partial^\ast \Omega_i \cap \partial^\ast \Omega_j).
    \]
    The constant $c_0$ is given by the first moment of $G$, i.e.,
     \[c_0 =  \int_0^\infty \left|z\right| G(z) \,dz= \frac1{\sqrt{2\pi}}.\] 
     The above mentioned $\Gamma$-convergence is an immediate consequence of the pointwise convergence of these functionals and the monotonicity property
    \begin{equation}\label{approximate monotonicity}
    	E_{N^2h}(u) \leq E_h(u) \quad \text{for all } u \colon \torus \to [0,1]^P, \text{ s.t.\ } \sum_{i=1}^P u_i =1, h>0, \text{ and } N\in \N,
    \end{equation}
    see \cite[Lemma A.2]{EseOtt14}.
    We write $A \lesssim B$ to express that $A\leq C B$ for
    a  generic constant $C<\infty$ that only depends on the dimension $d$, on the size $\Lambda$ of the domain, and the matrix $\sigma$ of surface tensions. By $A=O(B)$ we mean the quantitative $|A|\lesssim B$ while $A=o(B)$ as $h\to0$ means the qualitative $\frac A B \to0$ as $h\to0$.

    \section{Brakke's inequality and main result}\label{sec:brakke}
    
    The main statement of this work is Theorem \ref{thm brakke inequality} below. Assuming there was no drop of energy as $h\to0$, i.e.,
    \begin{align}\label{conv_ass}
    \int_0^T E_h(\chi^h)\, dt \to \int_0^T E(\chi)\,dt,
    \end{align}
    it states that the limit of the approximate solutions satisfies a $BV$-version of Brakke's inequality \cite{brakke1978motion}.
    
    \medskip
    
    Brakke's inequality is a weak formulation of motion by mean curvature $2V=H$ and is motivated by the following characterization of the normal velocity.
    Given a smoothly evolving hypersurface $\partial \Omega(t)=\Sigma(t)$ with normal velocity vector $V$ we have
    \begin{align}\label{brakke inequality smooth2}
      \frac{d}{dt} \int_{\Sigma} \zeta  = \int_\Sigma \left( -\zeta\, H \cdot V + V \cdot \nabla \zeta  + \partial_t \zeta\right)
    \end{align}
    for any smooth test function $\zeta \geq 0$. 
    The converse is also true: Given a function $V\colon \Sigma \to \R$ such that \eqref{brakke inequality smooth2} holds for any such test function $\zeta\geq 0$
    then $V$ is the normal velocity of $\Sigma$.
    In the pioneering work \cite{brakke1978motion}, Brakke uses this idea for his definition of  the equation $2V=-H$ 
    to extend the concept of motion by mean curvature to general varifolds.
    We recall his definition in our more restrictive setting of finite perimeter sets, which in the smooth two-phase case simplifies to the inequality
	\begin{align}\label{brakke inequality smooth1}
       2 \frac{d}{dt} \int_{\Sigma} \zeta  \leq \int_\Sigma \left( -\zeta \, |H|^2 +H\cdot \nabla \zeta + 2\partial_t \zeta\right).
    \end{align}
   
    \begin{defn}\label{def brakke flow bv}
    We say that the time-dependent partition $\chi\colon (0,T) \times \torus \to \{0,1\}^P \in BV$ with $\sum_i \chi_i =1$ a.e.\ moves by mean curvature with initial data $\chi^0 \colon \torus \to \{0,1\}^P \in BV$ with $\sum_i \chi^0_i =1 $ a.e.\ if there exists a $\sum_{i,j}\sigma_{ij} \dS dt$-measurable normal vector field
    $H\colon (0,T)\times \torus \to \R^d$ with
    \[
     \sum_{i,j} \sigma_{ij}\int_0^T\int |H|^2 \dS dt< \infty,
    \]
    which is the mean curvature vector of the partition in the sense that for all test vector fields $\xi \in C_c^\infty((0,T)\times \torus,\R^d)$
    \begin{equation}\label{def H}
    \begin{split}
	 \sum_{i,j} \sigma_{ij}\int_0^T \int \left(\nabla \cdot \xi - \nu_i \cdot \nabla \xi \, \nu_i \right) &\dS dt  \\
	 =  -\sum_{i,j} \sigma_{ij}&\int_0^T\int H\cdot \xi \,\dS dt,
	 \end{split}
    \end{equation} 
    such that for any test function 
    $\zeta\in C^\infty([0,T]\times \torus)$ with $\zeta \geq 0$ and $\zeta(\,\cdot\,,T)=0$ we have
    \begin{equation}\label{brakke inequality int}
      \begin{split}
      \sum_{i,j} \sigma_{ij}\int_0^T \int \left( - \zeta \, |H|^2 +  H\cdot \nabla \zeta   + 2\partial_t \zeta \right)& \dS dt\\
     \geq - \sum_{i,j} \sigma_{ij}  \int &\zeta(\,\cdot\,,0)\dSnull
    \end{split}
    \end{equation}
    Here and throughout, $\nu_i$ denotes the measure theoretic normal of Phase $i$ characterized by the equation $\nabla \chi_i = \nu_i \left|\nabla\chi_i\right|$.
    Note that with this choice, $\nu_i$ points inwards.
    \end{defn}

    
    Equation \eqref{def H} encodes not only that $H$ is the mean curvature vector along the smooth part of the surface cluster but furthermore enforces the Herring angle condition along triple junctions, which comes from the integration by parts rule for smooth hypersurfaces $\Sigma$ with boundary $\Gamma$:
    \[
     \int_\Sigma \left(\nabla \cdot \xi -\nu \cdot \nabla \xi \, \nu\right) = \int_\Gamma \xi \cdot b
    -  \int_\Sigma H\cdot \xi,
    \]
    where $b$ denotes its conormal.
    
	Equation \eqref{brakke inequality int} does not only encode the mean curvature flow equation via the optimal dissipation of energy but also the initial data $\chi^0$ in a weak sense.
    
    \begin{thm}[Brakke's inequality]\label{thm brakke inequality}
    Given initial data $\chi^0 \colon \torus \to \{0,1\}^P$ with $E(\chi^0)< \infty$ and a finite time horizon $T<\infty$, for any sequence there exists a subsequence $h\downarrow0$ such that 
    the approximate solutions given by Algorithm \ref{MBO} 
    converge to a limit $\chi\colon (0,T)\times \torus \to \{0,1\}^P$ in $L^1$ and a.e.\ in space-time. Given the convergence assumption (\ref{conv_ass}),
    $\chi$ evolves by mean curvature in the sense of Definition \ref{def brakke flow bv}.
    \end{thm}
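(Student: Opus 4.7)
The plan is to follow the abstract minimizing movements route (compactness, De Giorgi energy-dissipation identity, passage to the limit) while exploiting two thresholding-specific features. First, a direct expansion using symmetry of $\sigma$ and $G_h$ together with $G_{h/2}\ast G_{h/2}=G_h$ yields
\begin{equation*}
  E_h(u) + \frac{1}{2h}\lauxbrakkedist_h^2(u, \chi^{n-1}) \;=\; -E_h(\chi^{n-1}) + \frac{2}{\sqrt h}\int u \cdot \sigma\,G_h \ast \chi^{n-1}\, dx,
\end{equation*}
so that (\ref{MM interpretation}) is equivalent to the pointwise selection rule $\chi^n(x) \in \arg\min_v v\cdot (\sigma\,G_h\ast \chi^{n-1})(x)$ over admissible $v\in[0,1]^P$ with $\sum_i v_i=1$; this upgrades the global minimization to a family of problems localized by any weight $\zeta\geq 0$. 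Second, applying De Giorgi's variational interpolation to this family will produce an \emph{exact} discrete energy-dissipation relation rather than the trivial one-sided inequality obtained from the test $u=\chi^{n-1}$.

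First I would establish compactness. Telescoping $E_h(\chi^n)+\frac{1}{2h}\lauxbrakkedist_h^2(\chi^n,\chi^{n-1})\leq E_h(\chi^{n-1})$ controls both $\sup_t E_h(\chi^h(t))$ and the total dissipation by $E_h(\chi^0)$, which is in turn controlled by $E(\chi^0)<\infty$ via (\ref{approximate monotonicity}) after an $h$-regularization of the initial datum. The $\Gamma$-convergence $E_h\to E$ translates the energy bound into a uniform spatial $BV$ bound, and the dissipation bound yields an $L^1$-valued modulus of continuity in time (thanks to the identification of $\frac{1}{2h}\lauxbrakkedist_h^2$ with a regularized $H^{-1/2}$-type norm of differences). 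An Aubin-Lions argument extracts a subsequence with $\chi^h\to \chi$ in $L^1((0,T)\times\torus)$ and a.e.

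Next I would introduce the variational interpolant $\tilde\chi^h(t)$ on $(t_{n-1},t_n]$ as a minimizer of $u\mapsto E_h(u)+\frac{1}{2(t-t_{n-1})}\lauxbrakkedist_h^2(u,\chi^{n-1})$ and differentiate the corresponding value function in $t$ to derive an exact discrete energy-dissipation identity. Localization by $\zeta$ proceeds by testing the pointwise selection rule with a competitor that coincides with $\chi^{n-1}$ where $\zeta=0$ and is obtained from $\chi^n$ by an infinitesimal transport along $\nabla\zeta$ elsewhere; combined with the interpolation identity this yields a discrete analogue of (\ref{brakke inequality int}) in which the curvature term appears as a $\zeta$-weighted $L^2$-norm of $\frac{1}{\sqrt h}\sigma\,G_h\ast\tilde\chi^h$ concentrated on the transition layer, and the transport term arises from the first-order commutator of $\zeta$ and $G_h\ast$.

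Finally I would pass $h\to 0$ in each term of the localized discrete inequality. The transport and $\partial_t\zeta$ contributions pass to the limit by the strong $L^1$ convergence of $\chi^h$ together with a Reshetnyak-type argument applied to the energy measures. The slope-squared term is the crux: it requires identifying the weak limit of $\frac{1}{\sqrt h}\sigma\,G_h\ast\chi^h$, localized against the concentrating measure $\dS dt$, with the mean curvature vector $H$ characterized by (\ref{def H}), and passing simultaneously to the limit in the quadratic form. This is where the assumption (\ref{conv_ass}) is indispensable, and where I expect the main obstacle to lie: it upgrades weak to strong convergence of the approximate energy measures, which on the one hand fixes the concentration profile needed to recognize $H$ in the limit of $\frac{1}{\sqrt h}\sigma\,G_h\ast\chi^h$, and on the other hand matches the lower-semicontinuous bound for the quadratic dissipation with its actual limit. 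Without (\ref{conv_ass}) one would be left with a strictly weaker one-sided inequality, which accounts for the conditional nature of the statement.
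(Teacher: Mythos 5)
Your overall architecture coincides with the paper's: the algebraic identity you write for $E_h(u)+\frac{1}{2h}\lauxbrakkedist_h^2(u,\chi^{n-1})$ is exactly the computation behind the paper's Lemma \ref{lem MM interpretation}, the upgrade of \eqref{MM interpretation} to a $\zeta$-weighted family of minimization problems is the key structural insight, and De Giorgi's variational interpolation applied to the localized functionals yields the exact discrete energy-dissipation identity you describe. Two steps, however, are underspecified in a way that conceals genuine obstacles.

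First, the transport term. The first-order part of the commutator $[\zeta,G_h\ast]$ produces, to leading order, $\int \partial_t^{-h}\chi^h\cdot\sigma\,\sqrt{h}\,\nabla G_h\ast(-\nabla\zeta\,\chi^h)\,dx$, whose formal limit is the \emph{velocity}-weighted quantity $-c_0\int_\Sigma V\cdot\nabla\zeta$, whereas Brakke's inequality \eqref{brakke inequality int} contains the \emph{curvature}-weighted quantity $-\frac{c_0}{2}\int_\Sigma H\cdot\nabla\zeta$. These agree only if $2V=H$, which is the assertion being proved; strong $L^1$ convergence of $\chi^h$ together with a Reshetnyak-type argument cannot convert one into the other. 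The missing step is to recognize the commutator contribution as one half of the first variation of the dissipation functional in the direction $\nabla\zeta$ and to invoke the Euler--Lagrange equation of the \emph{global} scheme \eqref{MM interpretation}, namely $\delta E_h(\chi^n,\xi)+\delta\bigl(\tfrac{1}{2h}\lauxbrakkedist_h^2(\,\cdot\,,\chi^{n-1})\bigr)(\chi^n,\xi)=0$, so as to trade it for $\tfrac12\,\delta E_h(\chi^n,\nabla\zeta)$, which under \eqref{conv_ass} converges via Proposition \ref{prop conv G K} to $\frac{c_0}{2}\sum_{i,j}\sigma_{ij}\int\nabla^2\zeta\colon(Id-\nu_i\otimes\nu_i)\dS$. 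This is the rigorous version of ``using the equation once'' at the discrete level, and without it the argument does not close.

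Second, the slope term. Identifying the weak limit of $\frac{1}{\sqrt h}\sigma G_h\ast\chi^h$ against the concentrating interfacial measure and then ``passing simultaneously to the limit in the quadratic form'' would require strong $L^2$-type convergence of the approximate curvature, which \eqref{conv_ass} does not supply. What is both available and sufficient is a one-sided bound: probe the local slope with inner variations $u_s$ transported along a vector field $\xi$, which gives
\begin{equation*}
|\partial E_h(\,\cdot\,,\chi^h;\zeta)|(u^h)\;\geq\;\frac{\frac{d}{ds}\big|_{s=0}E_h(u^h_s,\chi^h;\zeta)}{\bigl(2\sqrt h\int\zeta\,|G_{h/2}\ast((\xi\cdot\nabla)u^h)|_\sigma^2\,dx\bigr)^{1/2}},
\end{equation*}
pass to the limit separately in the numerator (which, up to negligible commutators, is the linear-in-$\xi$ first variation $\delta E_h(u^h,\zeta\xi)$, convergent by Proposition \ref{prop conv G K}) and in the quadratic denominator, and finally take the supremum over $\xi$. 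This duality step simultaneously \emph{constructs} $H\in L^2$ satisfying \eqref{def H} and delivers the $\liminf$ inequality for $\frac{c_0}{2}\int\zeta|H|^2$; since the slope enters Brakke's inequality with a favorable sign, a lower bound is all that is needed. With these two repairs, plus the routine reduction to time-independent $\zeta$ by piecewise-constant approximation in time, your outline becomes the paper's proof.
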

    
    \begin{rem}\label{rem compactness}
      Given initial conditions $\chi^0$ with $E(\chi^0)<\infty$ 
      the compactness in \cite[Proposition 2.1]{laux2015convergence} yields a subsequence such that $\chi^h\to \chi$ in $L^1$ and a.e.\ for a partition $\chi$ with $\esssup_t E(\chi(t)) \leq E(\chi^0)$.
    \end{rem}
   
    This statement is similar to our result in \cite{laux2015convergence}.
    There we proved the convergence of thresholding towards a distributional formulation of (multi-phase) mean-curvature flow, the same notion as in \cite{LucStu95}.
    Under the same assumption \eqref{conv_ass} as in the present work, for any $i=1,\dots,P$ we constructed a $|\nabla\chi_i|\,dt$-measurable function $\tilde V_i\colon (0,T) \times \torus \to \R$ with
    \[
      \int_0^T\int \tilde V_i^2 \left| \nabla \chi_i \right| dt < \infty,
    \]
    which is the (scalar) normal velocity of the $i$-th phase in the sense that
    \[
    \int_0^T \int \partial_t \zeta\, \chi_i \, dx\,dt  + \int \zeta(\,\cdot\, 0) \chi_i^0 \,dx= - \int_0^T \int \zeta \, \tilde V_i \left| \nabla \chi_i\right| dt
    \]
    for all $\zeta \in C^\infty([0,T]\times \torus)$ with $\zeta(\,\cdot\,,T)=0$,
    such that
    \begin{align}\label{v=H}
    \sum_{i,j} \sigma_{ij}\int_0^T \int \left( \nabla \cdot \xi -\nu_i \cdot \nabla \xi \, \nu_i - 2\, \xi \cdot \nu_i \, \tilde V_i \right) \frac12\left(|\nabla\chi_i| +|\nabla\chi_j| - |\nabla(\chi_i+\chi_j)| \right) dt=0
    \end{align}
    for all $ \xi \in C_0^\infty((0,T)\times \torus,\R^d)$.
    
    Without any regularity assumption, none of the two formulations is stronger in the sense that it implies the other.
    Nevertheless (\ref{v=H}) requires more regularity as it is formulated for sets of finite perimeter, whereas
    Brakke's inequality naturally extends to general varifolds.
	Finally, we note that our proof here is much softer than the result \eqref{v=H} proved in our earlier work \cite{laux2015convergence}.

    \section{De Giorgi's variational interpolation and idea of proof}\label{de_giorgi}
    
    
    It is a well-appreciated fact that a classical gradient flow $\dot u(t) = - \nabla E(u(t))$ of a smooth energy functional $E$ 
    on a Riemannian manifold can be characterized by the optimal rate of dissipation of the energy $E$ along the solution $u$:
    \begin{equation}\label{gradient flow}
    \frac{d}{dt} E(u(t)) \leq -\frac12 |\dot u(t)|^2 - \frac12 |\nabla E(u(t))|^2.
    \end{equation}
    This is the guiding principle in generalizing gradient flows to metric spaces where one replaces $|\dot u|$ by the metric derivative and $|\nabla E(u)|$ by some upper gradient, 
    e.g.\ the \emph{local slope} $|\partial E(u)|$, see \eqref{def local slope} for a definition in our context.
    
    As discussed in the introduction, mean curvature flow can be viewed as a gradient flow in the sense that for a smooth evolution $\Sigma=\Sigma(t)$ the energy, which in this case is the surface area $|\Sigma(t)|$, satisfies
    the inequality
    \[
    2 \frac{d}{dt} |\Sigma| = -\int_\Sigma   2V \cdot H \leq -\frac12 \int_\Sigma |H|^2  - \frac12 \int_\Sigma |2V|^2.
    \]
    While in the abstract framework, the dissipation of the energy is measured w.r.t.\ both terms $|\dot u|^2 \hat = \int_\Sigma |2V|^2$ and $|\partial E(u)|^2 \hat = \int_\Sigma |H|^2$,
    Brakke measures the rate only in terms of the local slope $\int_\Sigma H^2$ but asks for the \emph{localized} version \eqref{brakke inequality int}, which due to \eqref{brakke inequality smooth2} in the simple setting of a single surface evolving smoothly by its mean curvature is precisely \eqref{brakke inequality smooth1} with equality.
    
    \medskip
    
    The basis of this work is the approximate version of Brakke's inequality, Lemma \ref{lem MM interpretation} below.
    In view of the minimizing movements interpretation \eqref{MM interpretation} it should be feasible to obtain at least the \emph{global}
    inequality
    \[
     2 \frac{d}{dt} |\Sigma| \leq -\int_\Sigma |H|^2
    \]
    but the localized inequality \eqref{brakke inequality int} would be still out of reach.
    The lemma states that thresholding does not only solve the \emph{global} minimization problem \eqref{MM interpretation}
    but a whole \emph{family} of \emph{local} minimization problems, which will allow us to establish the family of localized inequalities \eqref{brakke inequality int}.
    
    \begin{lem}[Local minimality]\label{lem MM interpretation}
      Let $\chi^n$ be obtained from $\chi^{n-1}$ by one iteration of Algorithm \ref{MBO} and $\zeta\geq0$ an arbitrary test function. Then
      \begin{align}\label{E F d2}
       \chi^n = \arg \min_u \left\{ E_h(u,\chi^{n-1};\zeta) + \frac1{2h} \lauxbrakkedist_h^2(u,\chi^{n-1};\zeta)\right\},
      \end{align}
      where the minimum runs over all $u\colon \torus \to [0,1]^P$ with $\sum_i u_i =1$ a.e.. 
      By $\lauxbrakkedist_h(u,\chi;\zeta)$ we denote the localization of the metric $\lauxbrakkedist_h(u,\chi)$ given by
      \begin{align}\label{def local dist}
      \frac1{2h} \lauxbrakkedist_h^2(u,\chi;\zeta) &:= \frac1\h \int \zeta \left| G_{h/2} \ast\left( u-\chi\right) \right|^2_\sigma dx,
      \end{align}
      which is again a (semi-)metric on the space of all such $u$'s as above and in particular satisfies a triangle inequality.
      By $E_h(u,\chi;\zeta)$ we denote the localized (approximate) energy incorporating the localization error in both energy and metric:
      \begin{equation}
      \begin{split}
      E_h(u,\chi;\zeta) := \frac1\h \int \zeta\,u \cdot \sigma \,G_h\ast u\, dx
	&+ \frac1\h \int   \left(u-\chi\right) \cdot \sigma \left[\zeta,G_h\ast\right] \chi\,  dx \label{def local E}\\
		&- \frac1\h \int   \left(u-\chi\right) \cdot \sigma \left[\zeta,G_{h/2}\ast\right] G_{h/2}\ast \left(u- \chi\right)   dx.
	 \end{split}
      \end{equation}
    \end{lem}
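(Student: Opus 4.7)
The plan is to show, by an algebraic rearrangement, that with $\chi := \chi^{n-1}$ and $v := u - \chi$ the sum $E_h(u,\chi;\zeta) + \tfrac{1}{2h}\lauxbrakkedist_h^2(u,\chi;\zeta)$ reduces, up to an additive $u$-independent constant, to the pointwise-linear functional
\[
\frac{2}{\h}\int \zeta\, u \cdot \sigma\, G_h \ast \chi\, dx.
\]
Once this reduction is achieved, the minimization over simplex-valued $u$ decouples pointwise in $x$, and since $\zeta(x)\geq 0$ the minimizer at each $x$ is $u(x)=e_{i^*(x)}$ with $i^*(x)=\arg\min_i (\sigma G_h \ast \chi)_i(x)=\arg\min_i \phi_i(x)$. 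This is precisely the thresholding rule from Algorithm \ref{MBO} defining $\chi^n$.

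To carry out the reduction I would treat the quadratic and linear parts in $v$ separately. The key algebraic identity, which follows from the semigroup property $G_{h/2}\ast G_{h/2}\ast = G_h\ast$, self-adjointness of convolution with the even kernel, and the anti-selfadjointness of $[\zeta, G_{h/2}\ast]$ (paired with the symmetric matrix $\sigma$), reads
\[
\int \zeta\, G_{h/2}\ast v\cdot \sigma\, G_{h/2}\ast v\, dx = \int \zeta\, v\cdot \sigma\, G_h\ast v\, dx - \int v\cdot \sigma\, [\zeta, G_{h/2}\ast]\, G_{h/2}\ast v\, dx.
\]
Combined with the expansion of $\int \zeta u\cdot \sigma G_h\ast u$, this identity makes the $\int \zeta v\cdot \sigma G_h\ast v$ piece from the energy exactly cancel against the combined contributions of the third (quadratic) term in $E_h(u,\chi;\zeta)$ and the localized dissipation, so that no $v$-quadratic term survives.

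For the linear-in-$v$ contributions, the cross-term in the expansion of $\int \zeta u\cdot \sigma G_h\ast u$ produces the \emph{asymmetric} expression $\int v\cdot [\sigma G_h\ast(\zeta\chi) + \zeta\, \sigma G_h\ast\chi]\, dx$, the two summands being genuinely distinct because $\zeta$ does not commute with $G_h\ast$. The commutator term in $E_h(u,\chi;\zeta)$, namely $\int v\cdot \sigma\, [\zeta, G_h\ast]\chi = \int \zeta v\cdot \sigma G_h\ast\chi - \int v\cdot\sigma G_h\ast(\zeta\chi)$, is tailored to kill the $\sigma G_h\ast(\zeta\chi)$ piece and double the other, yielding the desired $2\int \zeta v\cdot\sigma G_h\ast\chi$. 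Re-expressing in terms of $u$ gives the claimed reduction.

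The semi-metric property of $\lauxbrakkedist_h(\cdot,\cdot;\zeta)$ is immediate from Minkowski's inequality applied to the $\sqrt{\zeta}$-weighted $L^2$ norm of $|G_{h/2}\ast(u-\chi)|_\sigma$, using the triangle inequality of $|\cdot|_\sigma$ on $(1,\dots,1)^\perp$. The whole argument is elementary; the only real subtlety---and the very point of the form chosen for $E_h(u,\chi;\zeta)$---is to verify that the commutator corrections are exactly those needed for the cancellations above, so that localization preserves the pointwise thresholding rule.
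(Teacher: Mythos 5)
Your proposal is correct and is essentially the paper's own argument run in reverse: the paper starts from the pointwise-minimized linear functional $2u\cdot\sigma\,G_h\ast\chi^{n-1}$, multiplies by $\zeta\geq0$, and reassembles it into $E_h(u,\chi^{n-1};\zeta)+\tfrac1{2h}\lauxbrakkedist_h^2(u,\chi^{n-1};\zeta)$ plus a $u$-independent term, using exactly the same ingredients (semigroup property, self-adjointness of $G_h\ast$, antisymmetry of the commutator, symmetry of $\sigma$) that you use to reduce the localized functional down to that linear form. The cancellations you identify are the correct ones, so no substantive difference from the paper's proof.
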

    Here and throughout the paper
    \begin{equation*}
      \left[\zeta,G_h\ast\right] u := \zeta \, G_h\ast u - G_h\ast \left( \zeta\,u\right) \approx -\nabla \zeta \cdot h\nabla G_h\ast u
    \end{equation*}
    denotes the commutator
    of the multiplication with the (smooth) function $\zeta$ and the convolution with the kernel $G_h$, both of which act componentwise on the vector $u$.
  
    \medskip
    
    Let us briefly comment on the structure of the localized energy $E_h$.
    First, by definition of $E_h$ we have
	\begin{align*}
	  E_h(u,u;\zeta) = \frac1\h \int \zeta\, u \cdot \sigma\, G_h\ast u\,dx \quad \text{and}\quad 	  E_h(u,\chi;1) =E_h(u), \quad \text{cf.\ } \eqref{def Eh},
	\end{align*}
    so that in particular we recover the minimizing movements interpretation \eqref{MM interpretation} in the case $\zeta\equiv 1$.
    Second, with the localization $\zeta$, the first integral in the definition of $E_h$ is an approximation of the \emph{localized} total interfacial energy $c_0 \sum_{i,j} \sigma_{ij}\int_{\Sigma_{ij}} \zeta$.
    We will see shortly that the second term gives rise to the transport term in Brakke's inequality, while the last term, the commutator arising from the metric term, will be shown to be negligible in the limit $h\to0$ for all quantities under consideration here.
      
    \medskip
    
    Thanks to the \emph{local} minimization property \eqref{E F d2} of the thresholding scheme
    we can apply the abstract framework of De Giorgi, cf.\ Chapters 1--3 in \cite{ambrosio2006gradient}, to this localized setting.
    As for any minimizing movements scheme, the comparison of $\chi^n$ to the previous time step $\chi^{n-1}$ in the minimization problem \eqref{E F d2} yields an energy-dissipation inequality which 
    serves well as an a priori estimate, but which fails to be sharp by a factor of $2$.
    To obtain a \emph{sharp} inequality we follow the ideas of De Giorgi. We introduce his \emph{variational interpolation} $u^h$ of $\chi^n$ and $\chi^{n-1}$:
    For $t\in (0,h]$ and $n\in \N$ we let 
    \begin{equation}\label{def var intpol}
     u^h( (n-1)h + t) := \arg \min_u \left\{ E_h(u,\chi^{n-1};\zeta) + \frac1{2t} \lauxbrakkedist_h^2(u,\chi^{n-1};\zeta)\right\}.
    \end{equation}
   Note that the choice of $u^h$ is not necessarily unique, but given $u^h(nh)=\chi^n$, we can choose $u^h$ to depend continuously on $t$ w.r.t.\ the metric $d_h$.
    Comparing $u^h(t)$ with $u^h(t+\delta t)$ in this minimization problem and taking the limit $\delta t  \to 0$ while keeping $h$ fixed, one obtains the sharp energy-dissipation inequality along this interpolation,
    the following approximate version of Brakke's inequality \eqref{brakke inequality int}.
    
	It is worth pointing out that opposed to the special case $t=h$, we do not have an explicit formula for the interpolations $u^h$, and there is no guarantee for $u^h_i \in \{0,1\}$;
	indeed we expect that generically $u^h_i \in (0,1)$.
	 
    \begin{cor}[Approximate Brakke inequality]\label{cor brakke inequality discr}
    For any test function $\zeta\geq 0 $, a time-step size $h>0$ and $T=Nh$ we have 
    \begin{align}
      &\frac h2 \sum_{n=1}^N \big| \partial  E_h(\,\cdot\,, \chi^{n-1};\zeta) \big|^2(\chi^n)
      + \frac12 \int_0^{T} \big| \partial  E_h(\,\cdot\,, \chi^h(t);\zeta) \big|^2(u^h(t))\, dt\notag\\
      & \qquad \qquad+ h\sum_{n=1}^N \frac1h \left( E_h(\chi^n,\chi^{n-1};\zeta)-E_h(\chi^n,\chi^n;\zeta) \right)
      \leq E_h(\chi^0,\chi^0;\zeta) -E_h(\chi^N,\chi^N;\zeta),\label{brakke inequality discr}
    \end{align}
    where $ \left| \partial E_h(\,\cdot\,,\chi;\zeta ) \right|(u)$ is the ``local slope" of $E_h(\,\cdot\,,\chi;\zeta) $ at $u$ defined by
    \begin{align}\label{def local slope}
      \left| \partial E_h(\,\cdot\,,\chi;\zeta ) \right|(u) := \limsup_{v\to u} \frac{\left( E_h(u,\chi;\zeta) - E_h(v,\chi;\zeta)\right)_+}{\lauxbrakkedist_h(u,v;\zeta)}.
    \end{align}
    The convergence $v\to u$ is in the sense of the metric $\lauxbrakkedist_h$.
    \end{cor}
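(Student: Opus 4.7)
My plan is to apply De Giorgi's abstract energy-dissipation machinery for minimizing movements (cf.\ Chapter 3 of \cite{ambrosio2006gradient}) to the localized minimization problem identified in Lemma \ref{lem MM interpretation}, and then to telescope the resulting per-step identities. For fixed $n\in\{1,\dots,N\}$, abbreviate $\tilde E(u):=E_h(u,\chi^{n-1};\zeta)$ and $\tilde d(u,v):=d_h(u,v;\zeta)$, and view the variational interpolation $\sigma\mapsto u^h((n-1)h+\sigma)$ on $\sigma\in(0,h]$ as De Giorgi's interpolation for the pair $(\tilde E,\tilde d(\cdot,\chi^{n-1}))$ starting from $\chi^{n-1}$.

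For $0<\sigma_1<\sigma_2\leq h$, testing the minimality of $u^h((n-1)h+\sigma_2)$ against $u^h((n-1)h+\sigma_1)$ and vice versa, then adding the two resulting inequalities, forces $\sigma\mapsto\tilde d(u^h((n-1)h+\sigma),\chi^{n-1})$ to be nondecreasing and $\sigma\mapsto\tilde E(u^h((n-1)h+\sigma))$ to be nonincreasing. Subtracting the inequalities, dividing by $\sigma_2-\sigma_1$, and passing to the limit $\sigma_1\uparrow\sigma_2=\sigma$ yields the sharp pointwise identity
\[
\frac{d}{d\sigma}\left(\tilde E(u^h((n-1)h+\sigma))+\frac{\tilde d^2(u^h((n-1)h+\sigma),\chi^{n-1})}{2\sigma}\right)=-\frac{\tilde d^2(u^h((n-1)h+\sigma),\chi^{n-1})}{2\sigma^2},
\]
which integrates from $\sigma=0^+$ to $\sigma=h$, using $u^h((n-1)h+\sigma)\to\chi^{n-1}$ and $\tilde d^2/(2\sigma)\to 0$, to the exact per-step EDI
\[
\tilde E(\chi^n)+\frac{\tilde d^2(\chi^n,\chi^{n-1})}{2h}+\int_{(n-1)h}^{nh}\frac{\tilde d^2(u^h(t),\chi^{n-1})}{2(t-(n-1)h)^2}\,dt=\tilde E(\chi^{n-1}).
\]
The $\tilde d^2/\sigma^2$ terms are then converted into local slopes via the pointwise bound $|\partial E_h(\,\cdot\,,\chi^{n-1};\zeta)|(u^h(t))\leq\tilde d(u^h(t),\chi^{n-1})/(t-(n-1)h)$, itself obtained by testing minimality against a competitor $v$ and using the triangle inequality of $d_h(\cdot,\cdot;\zeta)$ asserted in Lemma \ref{lem MM interpretation} to bound $\tilde d^2(v,\chi^{n-1})-\tilde d^2(u^h(t),\chi^{n-1})\leq 2\tilde d(u^h(t),\chi^{n-1})\tilde d(v,u^h(t))+\tilde d^2(v,u^h(t))$, then matching with definition (\ref{def local slope}).

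Summing the resulting weakened per-step inequality over $n=1,\dots,N$, and rewriting each telescoping term as $E_h(\chi^{n-1},\chi^{n-1};\zeta)-E_h(\chi^n,\chi^{n-1};\zeta)=[E_h(\chi^{n-1},\chi^{n-1};\zeta)-E_h(\chi^n,\chi^n;\zeta)]-[E_h(\chi^n,\chi^{n-1};\zeta)-E_h(\chi^n,\chi^n;\zeta)]$, produces (\ref{brakke inequality discr}) after the first bracket telescopes to $E_h(\chi^0,\chi^0;\zeta)-E_h(\chi^N,\chi^N;\zeta)$; note that $\chi^h(t)=\chi^{n-1}$ on $[(n-1)h,nh)$, so the slope factor in the integral agrees with $|\partial E_h(\,\cdot\,,\chi^h(t);\zeta)|^2(u^h(t))$ as in the statement. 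The main obstacle I anticipate is the well-definedness and measurable dependence of $\sigma\mapsto u^h((n-1)h+\sigma)$: existence at each $\sigma$ requires coercivity and lower semicontinuity of $\tilde E+\frac{1}{2\sigma}\tilde d^2(\cdot,\chi^{n-1})$ on the admissible simplex (coercivity being ensured by \eqref{sigma<0}, which makes $|\cdot|_\sigma$ a norm on $\{\xi:\sum_i\xi_i=0\}$, lower semicontinuity being routine for the heat-kernel smoothed functionals), while measurability of $t\mapsto|\partial E_h|^2(u^h(t))$ can be arranged by choosing the $d_h$-right-continuous selection flagged by the authors after (\ref{def var intpol}), along which the AGS chain-rule argument justifying the differential identity above applies.
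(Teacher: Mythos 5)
Your proposal is correct and takes essentially the same route as the paper: the paper's proof consists of applying the quoted abstract result (Theorem 3.1.4 and Lemma 3.1.3 of \cite{ambrosio2006gradient}, restated as Theorem \ref{thm dE/dt}) with $\chi=\chi^{n-1}$ and $t=h$ and summing over $n$, which is precisely the per-step De Giorgi energy-dissipation identity, the slope bound $|\partial E_h|(u^h)\leq \lauxbrakkedist_h(u^h,\chi^{n-1};\zeta)/\sigma$, and the telescoping rearrangement that you carry out explicitly.
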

    
    Our goal is to derive Brakke's inequality \eqref{brakke inequality int} from its approximate version \eqref{brakke inequality discr}, i.e., we want to relate the limits of the
    expressions in \eqref{brakke inequality discr} to the terms appearing in \eqref{brakke inequality int}: In Propositions \ref{prop sum F} we will show that the transport term arises from the increments 
    $\frac1h \left( E_h(\chi^n,\chi^{n-1};\zeta)-E_h(\chi^n,\chi^n;\zeta) \right)$. Then we will derive a $\liminf$-inequality between the local slope of the approximate energies $E_h$ and the squared mean curvature of the limiting partition in Proposition \ref{prop H and dE}.

    \medskip

    

     \medskip
     
    %

    While Corollary \ref{cor brakke inequality discr} is a mere application of the abstract theory in \cite{ambrosio2006gradient}, we will now use the particular character of thresholding, i.e.,
    the structure of the energy \eqref{def local E} and the metric term \eqref{def local dist} in order to pass to the limit in the approximate Brakke inequality \eqref{brakke inequality discr}.
    
    We start with the basic a priori estimate for the piecewise constant interpolation $\chi^h$.
    \begin{cor}[Energy-dissipation estimate]\label{cor energy dissipation estimate}
      Given initial conditions $\chi^0\colon \torus \to \{0,1\}^\numphases$ with finite energy $E_0:=E(\chi^0)<\infty$, a time-step size $h>0$ and a finite time horizon $T=Nh$ we have
      \begin{align}
      & \sup_N \left(E_h(\chi^N) +  h\sum_{n=1}^N\frac{\lauxbrakkedist_h^2(\chi^n,\chi^{n-1})}{2h^2} \right) \leq E_0. \label{energy dissipation estimate}
      \end{align}
      \end{cor}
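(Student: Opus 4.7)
The plan is to invoke the minimizing movements interpretation \eqref{MM interpretation} at each time step, telescope the resulting one-step inequality, and then dominate the initial approximate energy by the sharp energy $E_0$ via the monotonicity property \eqref{approximate monotonicity}.

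First, since $\chi^{n-1}$ lies in the admissible class over which \eqref{MM interpretation} is posed (it satisfies $\chi^{n-1}_i\in[0,1]$ and $\sum_i\chi^{n-1}_i=1$ a.e.), we may use it as a competitor for $\chi^n$ in that global minimization problem. Because $d_h(\chi^{n-1},\chi^{n-1})=0$, this comparison immediately yields the one-step energy-dissipation inequality
\begin{equation*}
E_h(\chi^n) + \frac{1}{2h}d_h^2(\chi^n,\chi^{n-1}) \;\leq\; E_h(\chi^{n-1}).
\end{equation*}

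Telescoping this inequality over $n=1,\dots,N$ produces
\begin{equation*}
E_h(\chi^N) + \sum_{n=1}^{N}\frac{1}{2h}d_h^2(\chi^n,\chi^{n-1}) \;\leq\; E_h(\chi^0),
\end{equation*}
which after rearranging the dissipation sum as $h\sum_n d_h^2(\chi^n,\chi^{n-1})/(2h^2)$ already has the desired left-hand side; it remains to replace $E_h(\chi^0)$ by $E(\chi^0)=E_0$.

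For the initial bound, I would exploit the approximate monotonicity \eqref{approximate monotonicity}: the map $h\mapsto E_h(u)$ is nonincreasing along the dyadic-type sequence $h,4h,9h,\dots$, so as $h\to 0$ the values $E_h(\chi^0)$ increase monotonically to $\sup_{h>0}E_h(\chi^0)$, which by the pointwise $\Gamma$-convergence recalled after \eqref{def Eh} coincides with $E(\chi^0)=E_0$. In particular, $E_h(\chi^0)\leq E_0$ for every $h>0$, and inserting this into the telescoped inequality and taking the supremum over $N$ establishes \eqref{energy dissipation estimate}.

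There is essentially no obstacle here, since the argument is the textbook energy-dissipation estimate for any minimizing-movements scheme; the only non-cosmetic step is the passage from $E_h(\chi^0)$ to $E(\chi^0)$, which is handled cleanly by the monotonicity property already proven in \cite{EseOtt14}. The more delicate sharp inequality from De Giorgi's variational interpolation, needed for Corollary \ref{cor brakke inequality discr}, is not required at this stage: here the slack by a factor of $2$ inherent in the one-step comparison is harmless, because we are only after an a priori bound rather than the optimal rate of dissipation.
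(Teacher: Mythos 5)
Your argument is correct and is exactly the paper's proof: test the global minimizing movements problem \eqref{MM interpretation} with the competitor $\chi^{n-1}$, telescope over $n$, and bound $E_h(\chi^0)\leq E(\chi^0)=E_0$ using \eqref{approximate monotonicity} together with the pointwise convergence $E_{h'}(\chi^0)\to E(\chi^0)$. The only nitpick is that \eqref{approximate monotonicity} gives monotonicity only along the subsequence $h/N^2$, not for all smaller time steps, but $E_h(\chi^0)\leq E_{h/N^2}(\chi^0)\to E(\chi^0)$ as $N\to\infty$ suffices for the bound you need.
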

        
    We recall the following proposition from \cite{laux2015convergence} which will allow us to pass to the limit
    in the approximate Brakke inequality for the scheme. It is only for this proposition we use the convergence assumption \eqref{conv_ass}.
  
    \begin{prop}[Lemma 2.8 and Proposition 3.5 in \cite{laux2015convergence}]\label{prop conv G K}
      Given $u^h\to \chi$ and $E_h(u^h) \to E(\chi)$, for any test function $\zeta\in C^\infty(\torus)$ it holds
      \begin{equation}
	   \frac1\h \int \zeta \,u^h \cdot \sigma \, G_h \ast u^h \, dx \to c_0 \sum_{i,j} \sigma_{ij} \int \zeta \frac12\left(|\nabla\chi_i| +|\nabla\chi_j| - |\nabla(\chi_i+\chi_j)| \right),\label{conv G(z)}
	 \end{equation}
	 and for any test matrix field $A \in C^\infty(\torus,\R^{d\times d})$ we have
	 \begin{equation}
	 \sum_{i,j} \sigma_{ij} \frac1\h \int A \colon  u_i^h\, h\nabla^2 G_h \ast u_j^h \, dx  \to c_0 \sum_{i,j} \sigma_{ij} \int  \nu_i \cdot A  \nu_i \frac12\left(|\nabla\chi_i| +|\nabla\chi_j| - |\nabla(\chi_i+\chi_j)| \right) .\label{conv D2G(z)}
      \end{equation}
    \end{prop}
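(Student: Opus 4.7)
The plan is to reduce both statements to scalar (single-phase) non-local perimeter estimates by exploiting the partition constraint $\sum_k u_k^h\equiv 1$, and then to upgrade $\Gamma$-liminfs to full limits using the hypothesis $E_h(u^h)\to E(\chi)$. Since $\sigma_{ii}=0$ and $G_h\ast 1=1$, the elementary pointwise identity
\begin{equation*}
 u_i^h\, G_h\ast u_j^h + u_j^h\, G_h\ast u_i^h
 = u_i^h(1-G_h\ast u_i^h)+u_j^h(1-G_h\ast u_j^h)-(u_i^h+u_j^h)\bigl(1-G_h\ast(u_i^h+u_j^h)\bigr)
\end{equation*}
rewrites each symmetrized pair product as a combination of three scalar quantities of the form $v(1-G_h\ast v)$. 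For each such scalar the representation $v(1-G_h\ast v)=\tfrac12\,G_h\ast|v(\cdot)-v(\cdot-z)|^2$ yields the $\Gamma$-liminf
\begin{equation*}
 \liminf_{h\to 0}\frac{1}{\sqrt h}\int\zeta\, v^h(1-G_h\ast v^h)\,dx\geq c_0\int\zeta\,|\nabla v|, \qquad \zeta\geq 0,\ v^h\to v\in BV(\torus;\{0,1\}).
\end{equation*}

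Apply this with $\zeta\equiv 1$ to $v\in\{\chi_i,\chi_j,\chi_i+\chi_j\}$ for each pair. Summing with weights $\sigma_{ij}$ the identity rewrites $E_h(u^h)$ as a difference $A_h-B_h$ of two nonnegative sums; the separate liminf bounds $\liminf A_h\geq A^\star$ and $\liminf B_h\geq B^\star$ satisfy $A^\star-B^\star=E(\chi)$. The hypothesis $E_h(u^h)\to E(\chi)$ then forces $A_h\to A^\star$ and $B_h\to B^\star$ separately, and the elementary principle that a convergent sum of nonnegative sequences with at-least-correct liminfs must converge termwise delivers full convergence of each single-phase scalar $\frac{1}{\sqrt h}\int u_i^h(1-G_h\ast u_i^h)\,dx$ and each pair-sum scalar $\frac{1}{\sqrt h}\int(u_i^h+u_j^h)(1-G_h\ast(u_i^h+u_j^h))\,dx$. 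To pass from $\zeta\equiv 1$ to general $\zeta\geq 0$ I use the standard sandwich: apply the scalar $\Gamma$-liminf to both $\zeta$ and $\|\zeta\|_\infty-\zeta$, add, and use the $\zeta\equiv 1$ convergence to identify each limit exactly. Recombining the three scalar limits through the identity yields $\frac{1}{\sqrt h}\int\zeta\, u_i^h\, G_h\ast u_j^h\,dx\to c_0\int\zeta\,d\mu_{ij}$ with $d\mu_{ij}=\tfrac12(|\nabla\chi_i|+|\nabla\chi_j|-|\nabla(\chi_i+\chi_j)|)$, and summation with $\sigma_{ij}$ is (\ref{conv G(z)}).

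For (\ref{conv D2G(z)}) the key pointwise formula is $h\nabla^2 G_h(z)=(zz^\top/h-I)\,G_h(z)$. After the rescaling $z=\sqrt h\hat z$ the left-hand side becomes $\frac{1}{\sqrt h}\int\int\bigl(\hat z\cdot A(x)\hat z-\mathrm{tr}\,A(x)\bigr)u_i^h(x)u_j^h(x-\sqrt h\hat z)\,G_1(\hat z)\,d\hat z\,dx$. The $-\mathrm{tr}\,A$ contribution is a tested version of the first part and is handled by (\ref{conv G(z)}). The remaining semidefinite weight $\hat z\cdot A\hat z$ is matched to the interface normal via the directional Gaussian moment
\begin{equation*}
 \int_{\R^d}(\hat z\cdot A\hat z)\,(\hat z\cdot\nu)_+\,G_1(\hat z)\,d\hat z \;=\; c_0\bigl(\nu\cdot A\nu+\mathrm{tr}\,A\bigr),
\end{equation*}
obtained by rotating so that $\nu$ is an axis and using standard one-dimensional Gaussian moments. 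The main obstacle is that this tensor weight is not pointwise nonnegative once $A$ is sign-indefinite, so the ``sum of nonnegative liminfs equals limit of sum'' principle from the first part does not apply verbatim. I would split $A=A_+-A_-$ into positive and negative symmetric parts, run a PSD-directional variant of the first argument for each, and use (\ref{conv G(z)}) to absorb the resulting trace cross-term. The $x$-dependence of $A$ is controlled by the commutator $[\zeta,G_h\ast]u\approx -h\nabla\zeta\cdot\nabla G_h\ast u$ that already appears in the localized energy in (\ref{def local E}), so the error estimates dovetail with the rest of the paper.
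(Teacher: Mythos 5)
Your algebraic identity rewriting $u_i^hG_h\ast u_j^h+u_j^hG_h\ast u_i^h$ in terms of the three scalar quantities $v(1-G_h\ast v)$ is correct, and the $\zeta$ versus $\|\zeta\|_\infty-\zeta$ sandwich is indeed the standard localization device. But the pivotal step of your argument for \eqref{conv G(z)} is invalid: after the identity, $E_h(u^h)$ is a \emph{difference} $A_h-B_h$ of two nonnegative families, where $B_h$ collects the pair-union terms $\frac1\h\int(u_i^h+u_j^h)(1-G_h\ast(u_i^h+u_j^h))\,dx$. The principle ``a convergent sum of nonnegative sequences, each with at-least-correct liminf, converges termwise'' rests on $\limsup A_h\leq\lim(A_h+B_h)-\liminf B_h$; for a difference the analogous manipulation gives $\limsup A_h=\lim(A_h-B_h)+\limsup B_h$, and you have no upper bound on $\limsup B_h$ (upper semicontinuity of these nonlocal perimeters holds only along recovery sequences, not along arbitrary ones). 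Concretely, $A_h=A^\star+c$ and $B_h=B^\star+c$ for any $c>0$ is consistent with all three facts you invoke, so nothing forces termwise convergence. What is actually needed --- and what Lemma 2.8 of the cited reference supplies --- is the \emph{pairwise} localized liminf inequality
\[
\liminf_{h\to0}\frac1\h\int\zeta\,u_i^hG_h\ast u_j^h\,dx\;\geq\;c_0\int\zeta\,\frac12\left(|\nabla\chi_i|+|\nabla\chi_j|-|\nabla(\chi_i+\chi_j)|\right),\qquad\zeta\geq0,
\]
for each pair separately; this requires a blow-up/covering argument on $\partial^\ast\Omega_i\cap\partial^\ast\Omega_j$ and does \emph{not} follow from the two-phase scalar liminf you quote, precisely because of the minus sign in your identity. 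Once one has it, $E_h(u^h)$ is a genuine $\sigma_{ij}$-weighted sum of nonnegative terms with correct liminfs and your termwise principle applies as stated.

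For \eqref{conv D2G(z)} your Gaussian moment identity checks out, but the sketch inherits the gap above and is missing the ingredient that supplies the matching upper bound for the $|z|^2$-weighted quantities: the approximate monotonicity \eqref{approximate monotonicity}, which combined with the liminf inequality at scale $N^2h$ upgrades the hypothesis to $E_{N^2h}(u^h)\to E(\chi)$ for each fixed $N$ and thereby controls kernels such as $|z|^2G_h(z)\lesssim h\,G_{4h}(z)$. Without this (or an equivalent tightness statement for the measures $\frac1\h u_i^h(x)u_j^h(x-\sqrt h\hat z)G_1(\hat z)\,d\hat z\,dx$ in the $\hat z$-variable), the positive/negative-part splitting of $A$ cannot be closed. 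Note that the paper does not reprove this proposition but imports it from \cite{laux2015convergence}; your task was in effect to reconstruct that proof, and the two items above are exactly its core.
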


  In \cite{laux2015convergence} we used the above proposition to pass to the limit in the first variation of the energy
  \begin{equation}\label{def delta E}
   \delta E_h(u,\xi) := \frac{d}{ds}\Big|_{s=0} E_h(u_s),
  \end{equation}
  where the inner variations $u_s$ of $u$ along a vector field $\xi$ are given by the transport equation
  \begin{align}\label{def us}
    \partial_s u_s + \left(\xi \cdot \nabla\right) u_s = 0 \quad u_s|_{s=0}=u.
  \end{align}
  \begin{prop}[Proposition 3.2, Remark 3.3 and Lemma 3.4 in \cite{laux2015convergence}]\label{prop dE and dE old}
  Given $u\colon \torus \to [0,1]^\numphases$ with $\sum_i u_i =1$ a.e.\ we have
  \begin{equation}\label{dE and dE old}
  \Big| \delta E_h(u,\xi) 
   - \frac1\h \sum_{i,j} \sigma_{ij}\int \nabla \xi \colon  u_i \left( G_h \, Id - h \nabla^2 G_h \right) \ast u_j \,dx\Big| \lesssim \h \|\nabla^2 \xi \|_\infty E_h(u).
   \end{equation}
  In particular if $u^h\to \chi\in \{0,1\}^P$ and $E_h(u^h)\to E(\chi)<\infty$ we have
  \begin{equation}\label{old dE to dE}
   \delta E_h(u^h,\xi) \to  c_0 \sum_{i,j} \sigma_{ij} \int \nabla \xi \colon \left( Id-\nu_i\otimes\nu_i\right) \dS.
  \end{equation}
  \end{prop}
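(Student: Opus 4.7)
The plan is to differentiate \eqref{def Eh} along the inner variations \eqref{def us}, and then to manipulate the resulting integral by integration by parts and a Taylor expansion of $\xi$ inside the convolution with $G_h$; the Gaussian moment identity will produce the $G_h\,Id - h\nabla^2 G_h$ structure of the main term.

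First I would differentiate $E_h(u_s) = \frac{1}{\h}\sum_{i,j}\sigma_{ij}\int u_{s,i}(G_h\ast u_{s,j})\,dx$ at $s=0$, using $\partial_s u_s|_{s=0} = -(\xi\cdot\nabla)u$. By $i\leftrightarrow j$ symmetry of $\sigma$ and symmetry of $G_h$, the two resulting terms coincide, so that
\[
\delta E_h(u,\xi) = -\frac{2}{\h}\sum_{i,j}\sigma_{ij}\int (\xi\cdot\nabla u_i)(G_h\ast u_j)\,dx.
\]
Integration by parts then yields
\[
\delta E_h(u,\xi) = \frac{2}{\h}\sum_{i,j}\sigma_{ij}\int u_i(\nabla\cdot\xi)(G_h\ast u_j)\,dx + \frac{2}{\h}\sum_{i,j}\sigma_{ij}\int u_i\,\xi\cdot\nabla(G_h\ast u_j)\,dx.
\]

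Second, I would rewrite the last term as $\int\!\!\int u_i(x)u_j(y)\xi(x)\cdot\nabla G_h(x-y)\,dy\,dx$ and Taylor-expand $\xi(x) = \xi(y) + (x-y)\cdot\nabla\xi(y) + R(x,y)$ with $|R(x,y)|\le\tfrac12\|\nabla^2\xi\|_\infty|x-y|^2$. After integrating in $x$, the zeroth-order term contributes $-\sum_{i,j}\sigma_{ij}\int u_j\,\xi\cdot\nabla(G_h\ast u_i)\,dx$, which by $i\leftrightarrow j$ symmetry equals minus the quantity we are trying to compute, and can be absorbed into the left-hand side. The first-order term is simplified using $\nabla G_h(z) = -(z/h)G_h(z)$ and the moment identity $z\otimes z\,G_h(z) = h\,Id\,G_h(z) + h^2\nabla^2 G_h(z)$, which gives $z_a\partial_c G_h(z) = -\delta_{ac}G_h(z) - h\,\partial_a\partial_c G_h(z)$. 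Collecting all terms yields
\[
\delta E_h(u,\xi) = \frac{1}{\h}\sum_{i,j}\sigma_{ij}\int \nabla\xi:u_i(G_h\,Id - h\nabla^2 G_h)\ast u_j\,dx + \frac{1}{\h}\,\mathrm{Rem},
\]
where $\mathrm{Rem}$ collects the contribution of $R(x,y)\cdot\nabla G_h(x-y)$.

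The third step is to bound $\mathrm{Rem}$. Using $|\nabla G_h(z)| = (|z|/h)G_h(z)$ and the pointwise estimate $(|z|/\sqrt h)^3 G_h(z)\lesssim G_{4h}(z)$ (which follows from $t^3 e^{-t^2/2}\lesssim e^{-t^2/8}$ after rescaling), I get
\[
|\mathrm{Rem}|\lesssim \|\nabla^2\xi\|_\infty\,\h\sum_{i,j}\sigma_{ij}\int u_i\, G_{4h}\ast u_j\,dx = \|\nabla^2\xi\|_\infty\,\h\,\sqrt{4h}\,E_{4h}(u).
\]
The monotonicity \eqref{approximate monotonicity} with $N=2$ then gives $E_{4h}(u)\le E_h(u)$, hence $|\mathrm{Rem}|/\h\lesssim \h\,\|\nabla^2\xi\|_\infty\,E_h(u)$, which is \eqref{dE and dE old}.

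The convergence \eqref{old dE to dE} then follows by applying Proposition~\ref{prop conv G K}: the $G_h\,Id$ part is handled by \eqref{conv G(z)} with $\zeta=\nabla\cdot\xi$, the $h\nabla^2 G_h$ part by \eqref{conv D2G(z)} with $A=\nabla\xi$, and the error from \eqref{dE and dE old} vanishes since $\h\,E_h(u^h)\to 0$. The identity $\nabla\cdot\xi - \nu_i\cdot\nabla\xi\,\nu_i = \nabla\xi:(Id-\nu_i\otimes\nu_i)$ matches the two limits to the asserted expression.

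The main obstacle is the Taylor/symmetry bookkeeping of step two: the $\nabla\xi$-contracted form is not visible directly, and only emerges once the zeroth-order Taylor piece is absorbed back into the left-hand side via $i\leftrightarrow j$ symmetry, while the first-order piece is converted through the Gaussian moment identity. The remainder estimate also has a subtle point, since the Taylor $|x-y|^2$ factor broadens the effective Gaussian to $G_{4h}$, and reducing it back to $E_h(u)$ relies on the monotonicity \eqref{approximate monotonicity} of the discrete energies in the time-step size.
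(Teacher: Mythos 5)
Your proposal is correct and follows essentially the same route as the paper: compute the first variation, integrate by parts, expand the resulting commutators/convolutions via a Taylor expansion of $\xi$, use $\nabla G(z)=-zG(z)$ and $(Id-z\otimes z)G(z)=-\nabla^2 G(z)$ to produce the $G_h\,Id-h\nabla^2 G_h$ structure, and control the cubic-moment remainder by $G_{4h}$ together with the monotonicity \eqref{approximate monotonicity}, then conclude \eqref{old dE to dE} from Proposition \ref{prop conv G K}. The only cosmetic difference is that the paper organizes the expansion in commutator notation $[\xi\cdot,\nabla G_h\ast]$ and $[\nabla\cdot\xi,G_h\ast]$, whereas you absorb the zeroth-order Taylor piece back into the left-hand side by symmetry; the substance is identical.
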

  
  \begin{rem}
  Although the proof is contained in \cite{laux2015convergence}, we will recall the short argument for \eqref{dE and dE old} and rephrase it in the language of commutators to introduce the reader to the notation. 
  Note that the argument for \eqref{old dE to dE} is only based on the fact that the measure on the right-hands side of \eqref{conv G(z)} agrees with the trace of the right-hand side measure of \eqref{conv D2G(z)}.
  \end{rem}

  
  \medskip
  
  In the absence of the localization, i.e., if the test function $\zeta$ is constant, the last left-hand side term 
  $
    h\sum_{n=1}^N \frac1h\left( E_h(\chi^n,\chi^{n-1};\zeta)-E_h(\chi^n,\chi^n;\zeta) \right)
  $
   in \eqref{brakke inequality discr} vanishes. 
   However, for a non-constant test function we have to pass to the limit in this extra term.
   Let us again restrict ourselves to the two-phase case for the following short discussion to see that formally, the behavior of this term is obvious.
   Expanding $\zeta$ (and ignoring the commutator in the metric term for a moment), the leading-order term of the increments on the left-hand side of \eqref{brakke inequality discr} as $h\to 0$ is 
    \begin{equation}\label{heuristics}
    \begin{split}
      \frac1h\left(E_h(\chi^n,\chi^{n-1};\zeta)-E_h(\chi^n,\chi^{n};\zeta)\right)
     &\approx \int   \frac{\chi^n-\chi^{n-1}}{h}  \frac1\h\left[\zeta,G_h\ast\right] (1-\chi^{n-1})\,  dx\\
     &\approx \int   \frac{\chi^n-\chi^{n-1}}{h} \,\nabla\zeta \cdot \h\nabla G_h \ast \chi^{n-1}\,  dx,
     \end{split}
    \end{equation}
    which at least formally (and after integration in time) converges to $  -c_0\int_0^T\int_\Sigma  V \cdot \nabla \zeta$. Hence we expect to recover the
    transport term $-\frac{c_0}2\int_0^T\int_\Sigma H \cdot \nabla \zeta$ in Brakke's inequality \eqref{brakke inequality int} by using the equation $2V=H$ once.
  In the following proposition we make this step rigorous under the convergence assumption \eqref{conv_ass}. 
	More precisely, we prove the estimate
	\[
	\left| \frac1h\left(E_h(u,\chi;\zeta)-E_h(u,u;\zeta)\right)
	- \delta \Big( \frac1{2h} \lauxbrakkedist_h^2(\,\cdot\,,\chi)\Big) (u,\nabla \zeta) \right|
	\lesssim_{\zeta,\xi}   \sum_{p=1}^2 \Big(h^{1/4} \frac{\lauxbrakkedist_h(u,\chi)}{h}\Big)^p
	+ h^{1/4} E_h(\chi),
	\]  
	where the implicit constant in $\lesssim_{\zeta,\xi}$ depends on the test fields $\zeta$ and $\xi$. Afterwards we apply the Euler-Lagrange equation 
	$
	\delta \big( \frac1{2h} \lauxbrakkedist_h^2(\,\cdot\,,\chi)\big) (u,\xi)
	=-\delta E_h(u,\xi)
	$
	of the global minimizing movements principle \eqref{MM interpretation} and Proposition \ref{prop dE and dE old} to obtain the transport term in the form 
  \[
   \frac{c_0}2 \int_0^T \int \nabla^2 \zeta  \colon \left( Id-\nu\otimes \nu\right) \left|\nabla \chi\right|dt.
   \]

  \begin{prop}\label{prop sum F}
  Given the convergence assumption \eqref{conv_ass} and with $T=Nh$ we have
  \begin{align*}
    \lim_{h\to 0} h\sum_{n=1}^N& \frac1h\left( E_h(\chi^n,\chi^{n-1};\zeta) - E_h(\chi^n,\chi^n;\zeta) \right) \\
    &= \frac {c_0}2 \sum_{i,j} \sigma_{ij} \int_0^T \int \nabla^2 \zeta  \colon \left( Id-\nu_i\otimes \nu_i\right)\dS dt.
  \end{align*}
  \end{prop}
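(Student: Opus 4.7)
The proof follows the three-step strategy outlined in Section \ref{de_giorgi}. \emph{First}, I would establish a pointwise-in-$n$ estimate identifying the discrete increment $\frac{1}{h}(E_h(\chi^n,\chi^{n-1};\zeta)-E_h(\chi^n,\chi^n;\zeta))$ with the first variation of the dissipation functional $\frac{1}{2h}\lauxbrakkedist_h^2(\cdot,\chi^{n-1})$ at $\chi^n$ in direction $\nabla\zeta$, up to remainders controlled by the dissipation and the energy. \emph{Second}, the Euler-Lagrange equation of the global minimizing movements principle \eqref{MM interpretation}, $\delta(\frac{1}{2h}\lauxbrakkedist_h^2(\cdot,\chi))(u,\xi) = -\delta E_h(u,\xi)$, converts this variation into $-\delta E_h(\chi^n,\nabla\zeta)$. \emph{Third}, Proposition \ref{prop dE and dE old} together with the energy-convergence assumption \eqref{conv_ass} yields the pointwise-in-time limit of $\delta E_h(\chi^h(t),\nabla\zeta)$, and the uniform bound \eqref{dE and dE old} allows passage to the time integral via dominated convergence.

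The bulk of the work is in the first step. I would prove the quantitative estimate
\[
\left|\frac{1}{h}(E_h(u,\chi;\zeta)-E_h(u,u;\zeta)) - \delta\bigl(\tfrac{1}{2h}\lauxbrakkedist_h^2(\cdot,\chi)\bigr)(u,\nabla\zeta)\right| \lesssim_\zeta \sum_{p=1}^2\Big(h^{1/4}\tfrac{\lauxbrakkedist_h(u,\chi)}{h}\Big)^p + h^{1/4}E_h(\chi)
\]
by expanding the left-hand side using \eqref{def local E}, which produces the two commutator terms $[\zeta,G_h*]\chi$ and $[\zeta,G_{h/2}*]G_{h/2}*(u-\chi)$, and applying the leading-order Taylor expansion $[\zeta,G_h*]f \approx -h\nabla\zeta\cdot\nabla G_h*f$. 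In parallel, the first variation $\delta(\frac{1}{2h}\lauxbrakkedist_h^2(\cdot,\chi))(u,\nabla\zeta)$ is computed by differentiating along the transport flow \eqref{def us} and integrating by parts; the leading-order term matches the commutator expansion after using the self-adjointness of $G_{h/2}$ and the identity $G_{h/2}*G_{h/2}=G_h$, together with the approximation $G_h*u\approx G_h*\chi$ valid at leading order. The remainders split by the number of factors of $u-\chi$: terms linear in $u-\chi$ contribute to the $p=1$ portion, while the metric commutator, being quadratic in $u-\chi$, contributes to $p=2$. The powers of $h^{1/4}$ arise by distributing Taylor remainders between $L^\infty$-bounds on the derivatives of $\zeta$ and Cauchy-Schwarz in $L^2_x$.

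Once the pointwise estimate is in place, summing over $n=1,\dots,N$ with $T=Nh$ and applying the Euler-Lagrange identity yields
\[
h\sum_{n=1}^N\tfrac{1}{h}(E_h(\chi^n,\chi^{n-1};\zeta)-E_h(\chi^n,\chi^n;\zeta)) = -h\sum_{n=1}^N\delta E_h(\chi^n,\nabla\zeta) + \mathrm{Err}_h,
\]
with $\mathrm{Err}_h\to 0$: the $p=2$ contribution is bounded by $O(h^{3/2}E_0)$ directly via Corollary \ref{cor energy dissipation estimate}, the $p=1$ contribution by $O(h^{1/4}\sqrt{TE_0})$ after discrete Cauchy-Schwarz in $n$, and the $h^{1/4}E_h(\chi^{n-1})$ term by $O(h^{1/4}TE_0)$. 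For the main term, under \eqref{conv_ass} combined with the $\Gamma$-liminf inequality one has $E_h(\chi^h(t))\to E(\chi(t))$ for a.e.\ $t\in(0,T)$, so Proposition \ref{prop dE and dE old} applies pointwise in time, and the a priori bound $|\delta E_h(\chi^h,\nabla\zeta)|\lesssim_\zeta E_h(\chi^h)\leq E_0$ inherited from \eqref{dE and dE old} enables dominated convergence. The coefficient $\frac{c_0}{2}$ in the target expression is consistent with the formal heuristic \eqref{heuristics} and reflects the passage $2V=H$ inherent in identifying the discrete increment with the first variation of the dissipation (rather than with the first variation of the energy directly). The main obstacle is the first step: bundling the Taylor remainders into exactly the form $(h^{1/4}\lauxbrakkedist_h/h)^p + h^{1/4}E_h$ --- rather than a naive $O(h)E_h$ --- is essential so that the sum is absorbed by the energy-dissipation bound. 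The metric commutator $[\zeta,G_{h/2}*]G_{h/2}*(u-\chi)$ is especially delicate since it has no obvious cancellation with the leading-order terms and must be shown to be of lower order by exploiting its quadratic structure in $u-\chi$.
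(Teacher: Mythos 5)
Your proposal follows essentially the same route as the paper: split the increment via \eqref{def local E} into the energy commutator $[\zeta,G_h\ast]$ and the metric commutator, Taylor-expand the former to recognize the first variation of the dissipation functional, convert it by the Euler--Lagrange equation of \eqref{MM interpretation} into $\delta E_h(\chi^n,\nabla\zeta)$, pass to the limit via Proposition \ref{prop dE and dE old} under \eqref{conv_ass}, and kill the metric commutator using its quadratic structure in $\chi^n-\chi^{n-1}$ together with the energy-dissipation estimate. One bookkeeping slip: your displayed identity reads $-h\sum_n\delta E_h(\chi^n,\nabla\zeta)+\mathrm{Err}_h$, whereas the leading term is $+\frac h2\sum_n\delta E_h(\chi^n,\nabla\zeta)$ (the increment equals $-\tfrac12$ of the first variation of $\tfrac1{2h}\lauxbrakkedist_h^2(\cdot,\chi^{n-1})$ in direction $\nabla\zeta$, to leading order), so as written your chain would produce $-c_0$ instead of $+\tfrac{c_0}2$ in front of the limit; since you correctly identify where the factor $\tfrac12$ must come from, this is a sign/prefactor correction rather than a gap in the argument.
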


    The following a priori estimate for the variational interpolation $u^h$ defined in \eqref{def var intpol} follows now very easily.
    
      \begin{cor}[A priori estimate]\label{cor a priori estimate}
      Given initial conditions $\chi^0\colon \torus \to \{0,1\}^\numphases$ with finite energy $E_0:=E(\chi^0)<\infty$, a time-step size $h>0$ and a finite time horizon $T=Nh$, if the test function $\zeta$ is strictly positive, then for the interpolation \eqref{def var intpol} we have
      \begin{align}\label{energy dissipation estimate interpolation}
       \limsup_{h\downarrow0} \Big( \sup_t E_h(u^h(t)) +  \int_0^T \frac{\lauxbrakkedist_h^2(u^h(t),\chi^{h}(t))}{2 h^2} dt \Big)
       <\infty.
      \end{align}
      In particular, we have the following \emph{quantitative} proximity of $u^h(t)$ to $\chi^h(t)$ in our metric:
  \begin{equation}\label{quantitative closeness}
   \h \int_0^T \int \Big| G_{h/2} \ast \Big( \frac{u^h-\chi^h}{h}\Big) \Big|_\sigma^2 dx\,dt \quad \text{stays bounded as} \quad h\to0.
  \end{equation}
    \end{cor}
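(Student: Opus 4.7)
The two bounds in \eqref{energy dissipation estimate interpolation} are proved separately; both follow solely from the minimization property \eqref{def var intpol} and the basic a priori estimate of Corollary \ref{cor energy dissipation estimate}. Neither the strengthened convergence \eqref{conv_ass} nor Proposition \ref{prop sum F} is needed.

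For the dissipation bound the main ingredient is the standard monotonicity of minimizers in the penalization parameter. Given $t'\in((n-1)h,nh]$, set $t:=t'-(n-1)h\in(0,h]$; both $u^h(t')$ and $\chi^n$ minimize functionals of the form $E_h(\,\cdot\,,\chi^{n-1};\zeta)+\frac{1}{2s}\lauxbrakkedist_h^2(\,\cdot\,,\chi^{n-1};\zeta)$, at parameters $s=t$ and $s=h$ respectively. Testing each minimizer in the other's minimization problem and adding the two inequalities, the energy contributions cancel and one obtains
\[
\left(\tfrac{1}{2t}-\tfrac{1}{2h}\right)\bigl(\lauxbrakkedist_h^2(u^h(t'),\chi^{n-1};\zeta)-\lauxbrakkedist_h^2(\chi^n,\chi^{n-1};\zeta)\bigr)\le 0,
\]
so that $\lauxbrakkedist_h(u^h(t'),\chi^{n-1};\zeta)\le \lauxbrakkedist_h(\chi^n,\chi^{n-1};\zeta)$ since $t\le h$. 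The two-sided bound $c\le\zeta\le\|\zeta\|_\infty$ strips the localization, and since $\chi^h(t')=\chi^{n-1}$ on $((n-1)h,nh)$, integration over this interval followed by summation in $n$ yields
\[
\int_0^T\frac{\lauxbrakkedist_h^2(u^h,\chi^h)}{2h^2}\,dt\lesssim_\zeta\sum_{n=1}^N\frac{\lauxbrakkedist_h^2(\chi^n,\chi^{n-1})}{2h}\le E_0
\]
by Corollary \ref{cor energy dissipation estimate}. The reformulation \eqref{quantitative closeness} is an immediate rewriting via $\lauxbrakkedist_h^2(u,\chi)/(2h^2)=\sqrt{h}\int|G_{h/2}\ast((u-\chi)/h)|_\sigma^2\,dx$.

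For the energy bound, comparing $u^h(t')$ with $\chi^{n-1}$ in the minimization \eqref{def var intpol} gives
\[
E_h(u^h(t'),\chi^{n-1};\zeta)\le E_h(\chi^{n-1},\chi^{n-1};\zeta)\le\|\zeta\|_\infty E_h(\chi^{n-1})\le\|\zeta\|_\infty E_0
\]
uniformly in $n$ and $h$. To convert this into a bound on $E_h(u^h(t'))$, I use \eqref{def local E} to write $E_h(u^h,u^h;\zeta)=E_h(u^h,\chi^{n-1};\zeta)+A+B$, where $A$ and $B$ are the commutator integrals involving $[\zeta,G_h\ast]\chi^{n-1}$ and $[\zeta,G_{h/2}\ast]G_{h/2}\ast(u^h-\chi^{n-1})$. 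The elementary pointwise estimate $\|[\zeta,G_h\ast]f\|_\infty\lesssim\sqrt{h}\,\|\nabla\zeta\|_\infty\|f\|_\infty$ (from $\int|z|G_h(z)\,dz\lesssim\sqrt{h}$), combined with the $L^\infty$-bounds $0\le u^h,\chi^{n-1}\le 1$, controls both $A$ and $B$ by a constant depending only on $\zeta$ after absorbing the $1/\sqrt{h}$ prefactor. Finally, since $u^h$ takes values in the simplex and $\sigma_{ij}\ge 0$ for $i\ne j$, the integrand $u^h\cdot\sigma\,G_h\ast u^h$ is pointwise non-negative, so $\zeta\ge c$ implies $E_h(u^h,u^h;\zeta)\ge c\,E_h(u^h)$, closing the sup bound $\sup_t E_h(u^h(t))\lesssim_\zeta 1$ uniformly in $h$.

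The only non-routine ingredient is the monotonicity of minimizers in the parameter $t$; the commutator manipulations used for $A$ and $B$ are of the same type used throughout the paper. The strict positivity of $\zeta$ enters twice, via $c\le\zeta$, first to convert localized metric distances into unlocalized ones, and second to extract $E_h(u^h)$ from $E_h(u^h,u^h;\zeta)$.
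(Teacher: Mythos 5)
Your proof is correct, but it takes a genuinely different route from the paper's. The paper proves the corollary in one stroke by summing the De Giorgi energy identity \eqref{energy equality} over the time steps (i.e.\ essentially reusing the approximate Brakke inequality without the slope terms), bounding the left-hand side from below by $\inf\zeta\,\big(E_h(u^h(\tilde T))+\int_0^{\tilde T}\lauxbrakkedist_h^2(u^h,\chi^h)/(2h^2)\,dt\big)$, and controlling the remaining localization increments $\sum_n\big(E_h(\chi^n,\chi^{n-1};\zeta)-E_h(\chi^n,\chi^n;\zeta)\big)$ on the right-hand side via Proposition \ref{prop sum F} — which, as stated, invokes the convergence assumption \eqref{conv_ass}. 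You instead decouple the two bounds: for the dissipation you use the monotonicity of $s\mapsto\lauxbrakkedist_h(u^h(s),\chi^{n-1};\zeta)$ in the penalization parameter (the standard cross-testing argument, Lemma 3.1.2-type in \cite{ambrosio2006gradient}) to reduce directly to Corollary \ref{cor energy dissipation estimate}, and for the energy you compare with the trivial competitor $\chi^{n-1}$ and absorb the two commutator terms in \eqref{def local E} by the crude pointwise bound $\|[\zeta,G_h\ast]f\|_\infty\lesssim\sqrt h\,\|\nabla\zeta\|_\infty\|f\|_\infty$, which exactly cancels the $1/\sqrt h$ prefactor; the strict positivity of $\zeta$ and the pointwise nonnegativity of $u\cdot\sigma\,G_h\ast u$ then strip the localization. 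All steps check out (including the identity $\lauxbrakkedist_h^2(u,\chi)/(2h^2)=\sqrt h\int|G_{h/2}\ast((u-\chi)/h)|_\sigma^2\,dx$). What your route buys is that it is more elementary — it bypasses Theorem \ref{thm dE/dt} and Proposition \ref{prop sum F} entirely — and it makes explicit that the a priori estimate holds \emph{unconditionally}, i.e.\ without \eqref{conv_ass}; what the paper's route buys is brevity, since it recycles the energy identity that is in any case the backbone of the main theorem.
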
  
  
 The following statement is a post-processed version of our assumption \eqref{conv_ass}.
  \begin{lem}\label{la conv ass for var int}
  Given the convergence assumption (\ref{conv_ass}), for a subsequence, we also have the pointwise in time property
  \begin{align}\label{conv_ass pointwise}
    E_h(\chi^h) \to E(\chi) \quad \text{a.e. in } (0,T)
  \end{align}
  and furthermore for the variational interpolation $u^h$ given by \eqref{def var intpol}
  \begin{align}\label{conv_ass var int pointwise}
    E_h(u^h) \to E(\chi) \quad \text{a.e. in } (0,T).
  \end{align}
  Moreover, Lebesgue's dominated convergence theorem implies the integrated version
  \begin{align}\label{conv_ass var int}
    \int_0^T E_h(u^h)\,dt \to \int_0^T E(\chi)\,dt.
  \end{align}
  Additionally, the interpolations $u^h$ converge to the same limit in $L^1$, i.e., $\lim_h u^h = \lim_h \chi^h =\chi$.
  \end{lem}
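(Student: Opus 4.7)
The plan is to establish the four assertions in a slightly different order: a direct algebraic expansion makes the integrated convergence of $E_h(u^h)$ tractable, after which the pointwise convergence follows from the same Fatou-type argument as for $\chi^h$. First, for \eqref{conv_ass pointwise}, I would start from Remark~\ref{rem compactness} to obtain, along a subsequence, $\chi^h\to\chi$ in $L^1([0,T]\times\torus)$, and by Fubini plus a further subsequence, $\chi^h(t)\to\chi(t)$ in $L^1(\torus)$ for a.e.\ $t$. The $\Gamma$-liminf property of $E_h$ then yields $\liminf_{h\downarrow 0} E_h(\chi^h(t))\geq E(\chi(t))$ a.e., so $g_h(t):=E_h(\chi^h(t))-E(\chi(t))$ has $g_h^-\to 0$ a.e.\ and is dominated by $E(\chi)\leq E_0$. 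Dominated convergence together with the hypothesis $\int_0^T g_h\,dt\to 0$ forces $g_h^+\to 0$ in $L^1$, hence a.e.\ along a further subsequence.

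For the $L^1$ convergence $u^h\to\chi$, the a priori estimate~\eqref{quantitative closeness} reads $\|G_{h/2}\ast(u^h-\chi^h)\|_{L^2([0,T]\times\torus)}=O(h^{3/4})$. For any smooth test function $\phi$ the splitting $\phi=G_{h/2}\ast\phi+(\phi-G_{h/2}\ast\phi)$ shows that $\int(u^h-\chi^h)\phi\,dx\,dt\to 0$ (using $\|\phi-G_{h/2}\ast\phi\|_\infty\lesssim h\,\|\nabla^2\phi\|_\infty$ and the bound $u^h-\chi^h\in[-1,1]^P$ for one term, Cauchy-Schwarz for the other). Combined with the strong $L^1$ convergence $\chi^h\to\chi$, this gives weak $L^2$ convergence $u^h\rightharpoonup\chi$. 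The partition-like constraints $u^h_i\in[0,1]$ and $\sum_i u^h_i=1$ imply $\|u^h\|_{L^2}^2\leq\sum_i\int u^h_i\,dx\,dt=T\Lambda^d=\|\chi\|_{L^2}^2$, which together with weak lower semicontinuity upgrades the weak convergence to norm convergence, hence to strong $L^2$ (and thus $L^1$) convergence.

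The heart of the argument is the integrated convergence~\eqref{conv_ass var int}. Writing $v^h:=u^h-\chi^h$ and expanding the quadratic form in $E_h$ yields the identity
\[
E_h(u^h)-E_h(\chi^h)=\tfrac{1}{\h}\!\int v^h\!\cdot\sigma\,G_h\ast v^h\,dx+\tfrac{2}{\h}\!\int v^h\!\cdot\sigma\,G_h\ast\chi^h\,dx.
\]
Because $v^h$ is pointwise in $(1,\dots,1)^\perp$, Plancherel identifies the first term with $-\lauxbrakkedist_h^2(u^h,\chi^h)/(2h)$, whose time integral is $h\int_0^T \lauxbrakkedist_h^2/(2h^2)\,dt=O(h)$ by Corollary~\ref{cor a priori estimate}. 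For the cross term, using $G_h=G_{h/2}\ast G_{h/2}$ and Cauchy-Schwarz in space and time,
\[
\tfrac{2}{\h}\Big|\!\int_0^T\!\!\int v^h\!\cdot\sigma\,G_h\ast\chi^h\,dx\,dt\Big|\lesssim\tfrac{1}{\h}\|G_{h/2}\ast v^h\|_{L^2_{t,x}}\|G_{h/2}\ast\chi^h\|_{L^2_{t,x}}=O(h^{1/4}),
\]
using the elementary $L^2$ bound $\|G_{h/2}\ast\chi^h\|_{L^2_{t,x}}\leq(T\Lambda^d)^{1/2}$. Hence $\int_0^T E_h(u^h)\,dt=\int_0^T E_h(\chi^h)\,dt+o(1)\to\int_0^T E(\chi)\,dt$ by the hypothesis~\eqref{conv_ass}.

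Finally, \eqref{conv_ass var int pointwise} follows by applying the same Fatou-plus-domination argument to $\tilde g_h(t):=E_h(u^h(t))-E(\chi(t))$: the $\Gamma$-liminf (together with $u^h(t)\to\chi(t)$ in $L^1$ a.e.\ after a further subsequence) gives $\liminf\tilde g_h\geq 0$ a.e., the negative parts are dominated by $E(\chi)\leq E_0$, and the integrated convergence just established forces the positive parts to zero in $L^1$. The main obstacle I anticipate is the algebraic expansion above: the key insight is that $E_h(u^h)-E_h(\chi^h)$ decomposes into a diagonal piece that is exactly $-\lauxbrakkedist_h^2(u^h,\chi^h)/(2h)$ and hence directly controlled by~\eqref{quantitative closeness}, and a cross piece controlled by the same bound together with $\|G_{h/2}\ast\chi^h\|_{L^2}\leq\|\chi^h\|_{L^2}=O(1)$; without this clean splitting one would need finer pointwise estimates on $u^h-\chi^h$ beyond what the $L^1$ convergence of the previous step provides.
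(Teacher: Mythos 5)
Your proof is correct, and its central step coincides with the paper's: both compare $E_h(u^h)$ to $E_h(\chi^h)$ by expanding the quadratic form and controlling the difference through the dissipation bound \eqref{quantitative closeness} (your ``diagonal'' piece is exactly $-\frac{1}{2h}\lauxbrakkedist_h^2(u^h,\chi^h)$, whose time integral is $O(h)$ by Corollary \ref{cor a priori estimate}, and your cross term is $O(h^{1/4})$; the paper bounds the whole difference by $\frac1{\h}\int|G_{h/2}\ast(u^h-\chi^h)|_\sigma\,dx$ and applies Jensen, arriving at the same $h^{1/4}$ rate). Your route to the a.e.\ convergences -- $\Gamma$-liminf plus domination by $E(\chi)\leq E_0$ plus the integrated convergence forcing the positive part to zero in $L^1$ -- is the standard argument the paper invokes implicitly. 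The one place where you genuinely diverge is the $L^1$ convergence $u^h\to\chi$: the paper reuses the interpolation estimate \eqref{chi-chitilde} (adapted to $[0,1]$-valued functions) together with the energy bound \eqref{energy dissipation estimate interpolation} to get the quantitative bound $\frac1\h\int_0^T\int|u^h-\chi^h|\,dx\,dt=O(1)$, whereas you first upgrade \eqref{quantitative closeness} to weak $L^2$ convergence $u^h\rightharpoonup\chi$ by testing against mollified test functions, and then use the constraint $\|u^h\|_{L^2}^2\leq\sum_i\int u^h_i\,dx\,dt=\|\chi\|_{L^2}^2$ together with weak lower semicontinuity to conclude norm, hence strong, convergence. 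Your argument is softer and does not need the a priori energy bound on $u^h$; the paper's yields an explicit $O(\h)$ rate for the $L^1$ distance. Both are valid, and the minor cosmetic points (the semigroup property rather than Plancherel identifies the diagonal term with the metric; the equivalence of $|\cdot|_\sigma$ with the Euclidean norm on $(1,\dots,1)^\perp$ is needed when trading one for the other) do not affect correctness.
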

  
  \medskip
 
   In the following proposition, we probe the definition of the local slope \eqref{def local slope} with inner variations $u_s$. These are given by the transport equation \eqref{def us} and in the simpler two-phase case we obtain
    \begin{equation}\label{iop slope and dE}
    \big| \partial  E_h(\,\cdot\,, \chi;\zeta) \big|(u) \geq \frac{\delta E_h(\,\cdot\,,\chi;\zeta)(u,\xi)}{\sqrt{2 \h \int \zeta \left( G_{h/2}\ast (\xi \cdot \nabla u) \right)^2 dx}}.
    \end{equation}
     Then we will find that the localization $\zeta$ acts trivially on this term: As $h\to0$, the first variation of the localized energy  $\delta E_h(\,\cdot\,,\chi;\zeta)(u,\xi)$ behaves like the first variation of the global energy in direction of the localized vector field $\zeta\xi$, i.e., $ \delta E_h(\,\cdot\,)(u,\zeta \xi)$:
     \[
    \big| \delta E_h(\,\cdot\,,\chi;\zeta)(u,\xi)-\delta E_h(\,\cdot\,)(u,\zeta\xi) \big|
    \lesssim_{\zeta,\xi} h^{1/4} \frac{\lauxbrakkedist_h(u,\chi)}{h}.
     \]
    Similarly, it is straight-forward to see that 
    \[
    \bigg| \h \int \zeta \left( G_{h/2} \ast (\xi \cdot \nabla u) \right)^2 dx-
    \frac1\h \int \zeta \, \xi \otimes \xi \colon (1-u) h\nabla^2 G_h \ast u\,dx \bigg| \lesssim_{\zeta,\xi}  h^{1/4} E_h(u),
    \]
    where again the implicit constant depends on $\zeta $ and $\xi$.
    Taking the limit $h\to0$, we may apply Proposition \ref{prop conv G K} to both terms, the numerator and the denominator.
    Then taking the supremum over all possible vector fields $\xi$ we obtain the $\liminf$-inequality
    \[
    \frac{c_0}{2} \int_0^T \int \zeta H^2 \left| \nabla \chi \right| dt \leq \liminf_{h\to0}\frac12 \int_0^T\big| \partial  E_h(\,\cdot\,, \chi^h;\zeta) \big|^2(u^h)\,dt,
    \]
    which provides the final ingredient for the proof of Theorem \ref{thm brakke inequality}.
	
  \begin{prop}\label{prop H and dE}
  Let $\zeta>0$ be smooth, $\chi^h(t)$ the approximate solution obtained by Algorithm \ref{MBO} and let $u^h(t)$ be either the variational interpolation \eqref{def var intpol} or the approximate solution $\chi^h(t+h)$
  at time $t+h$. 
  Given the convergence assumption \eqref{conv_ass}, there exists a measurable normal vector field $H\in L^2(\sum_{i,j} \sigma_{ij}\dS dt) $ which is the mean curvature vector of the partition $\chi$ in the sense of \eqref{def H}, such that
  \begin{align}\label{H and dE}
  \frac{c_0}2 \sum_{i,j} \sigma_{ij} \int_0^T \int \zeta \,|H|^2 \dS  dt \leq \liminf_{h\to 0} \int_0^T \left| \partial E_h(\,\cdot \, , \chi^h;\zeta)\right|^2(u^h) \, dt.
  \end{align}
  \end{prop}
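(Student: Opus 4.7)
The plan is to establish the inequality by probing the local slope $\bigl| \partial E_h(\,\cdot\,,\chi^h;\zeta)\bigr|(u^h)$ through inner variations along smooth vector fields and then passing to the limit via the duality between the first variation (which encodes mean curvature) and the dissipation metric (which encodes velocity). Fix a smooth $\xi\in C_c^\infty((0,T)\times\torus,\R^d)$ and, at each time slice $t$, let $u_s$ be the inner variation of $u^h(t)$ along $\xi$ given by \eqref{def us}. Expanding the localized metric \eqref{def local dist} to first order in $s$ and using $u_s - u \approx -s\,\xi\cdot\nabla u$ yields, as $s\to 0$, the multi-phase analog of \eqref{iop slope and dE},
\[
\bigl| \partial E_h(\,\cdot\,,\chi^h;\zeta)\bigr|(u^h) \;\geq\; \frac{-\,\delta E_h(\,\cdot\,,\chi^h;\zeta)(u^h,\xi)}{\sqrt{2\sqrt{h}\int \zeta\,\bigl|G_{h/2}\ast(\xi\cdot\nabla u^h)\bigr|_\sigma^2 dx}}.
\]

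Next, I would replace both the numerator and the denominator by quantities accessible to Propositions \ref{prop dE and dE old} and \ref{prop conv G K}. For the numerator, the estimate stated directly above the proposition, $\bigl|\delta E_h(\,\cdot\,,\chi;\zeta)(u,\xi) - \delta E_h(u,\zeta\xi)\bigr| \lesssim_{\zeta,\xi} h^{1/4}\,d_h(u,\chi)/h$, combined with the quantitative proximity \eqref{quantitative closeness} (for $u^h$ the variational interpolation) or with \eqref{energy dissipation estimate} (for $u^h(t)=\chi^h(t+h)$), shows that the difference vanishes in $L^2(0,T)$; pointwise in $t$, Lemma \ref{la conv ass for var int} provides $E_h(u^h)\to E(\chi)$, so Proposition \ref{prop dE and dE old} gives $\delta E_h(u^h,\zeta\xi) \to c_0\sum_{i,j}\sigma_{ij}\int \nabla(\zeta\xi):(Id-\nu_i\otimes\nu_i)\,dS_{ij}$. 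For the denominator, an analogous commutator expansion rewrites the $|\cdot|_\sigma$-squared integrand, up to a vanishing error of size $O(h^{1/4}E_h(u^h))$, as $\tfrac{1}{\sqrt{h}}\sum_{i,j}\sigma_{ij}\int \zeta\,\xi\otimes\xi:u_i^h\,h\nabla^2 G_h\ast u_j^h\,dx$, whose limit is furnished by the second statement of Proposition \ref{prop conv G K} and produces $c_0\sum_{i,j}\sigma_{ij}\int \zeta(\xi\cdot\nu_i)^2\,dS_{ij}$.

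With the pointwise limits in hand, Fatou's lemma in time yields, for every admissible $\xi$,
\[
\liminf_{h\to 0}\int_0^T \bigl|\partial E_h(\,\cdot\,,\chi^h;\zeta)\bigr|^2(u^h)\,dt \;\geq\; \int_0^T \frac{\bigl(\sum_{i,j}\sigma_{ij}\int \nabla(\zeta\xi):(Id-\nu_i\otimes\nu_i)\,dS_{ij}\bigr)^2}{2c_0\,\sum_{i,j}\sigma_{ij}\int \zeta(\xi\cdot\nu_i)^2\,dS_{ij}}\,dt.
\]
I would first apply this bound with $\zeta\equiv 1$: the finiteness of the right-hand side (over all $\xi$) means the linear form $\xi\mapsto \sum_{i,j}\sigma_{ij}\iint \nabla\xi:(Id-\nu_i\otimes\nu_i)\,dS_{ij}\,dt$ is bounded with respect to the weighted $L^2$ seminorm induced by the quadratic form $\sum_{i,j}\sigma_{ij}\iint(\xi\cdot\nu_i)^2\,dS_{ij}\,dt$. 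Riesz representation then produces a measurable normal vector field $H\in L^2(\sum_{i,j}\sigma_{ij}\,dS_{ij}\,dt)$ satisfying precisely \eqref{def H}. Re-running the argument for the given $\zeta>0$ and using that $H$ is normal, the pointwise Cauchy--Schwarz identity $\sup_\xi (\int \zeta H\cdot\xi\,dS)^2/\int \zeta(\xi\cdot\nu)^2\,dS = \int \zeta |H|^2\,dS$ saturates the ratio and delivers the quantitative inequality \eqref{H and dE}.

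The hard part will be step two: tracking the commutator errors through the $|\cdot|_\sigma$-norm to show that localization can be commuted past the kernels $G_{h/2}$ and $G_h$ with control $O(h^{1/4})$ against the a priori bounded quantities $d_h(u^h,\chi^h)/h$ and $E_h(u^h)$. A subtler conceptual point is that the Riesz duality in step three needs the numerator and denominator measures in the Fatou bound to be compatible; this is guaranteed by the fact emphasized in the remark following Proposition \ref{prop dE and dE old}, namely that the measure in \eqref{conv G(z)} agrees with the trace of the matrix-valued measure in \eqref{conv D2G(z)}, which is what lets the constructed $H$ be independent of $\zeta$ and intrinsic to the limiting partition.
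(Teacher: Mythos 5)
Your proposal is correct and follows essentially the same route as the paper's proof: probing the local slope with inner variations along $\xi$, identifying the numerator with $\delta E_h(u^h,\zeta\,\xi)$ up to commutator errors controlled by \eqref{quantitative closeness}, identifying the denominator's limit via Proposition \ref{prop conv G K}, applying Fatou in time, and concluding by Riesz duality (first with $\zeta\equiv1$ to construct $H$, then for general $\zeta>0$). The only cosmetic difference is that the paper additionally uses $(\xi\cdot\nu_i)^2\leq|\xi|^2$ to reduce the denominator to the standard $L^2(\zeta\sum_{i,j}\sigma_{ij}\,dS\,dt)$ norm before invoking duality, whereas you keep the anisotropic quadratic form and saturate it using that $H$ is normal; both are valid.
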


    \section{Proofs}\label{sec:proofs}
       
    We first give the proofs of the main results, Theorem \ref{thm brakke inequality}, Lemma \ref{lem MM interpretation} and Corollary \ref{cor brakke inequality discr} and then turn to the other statements which form the basis of the proof of Theorem \ref{thm brakke inequality}.
    
  \begin{proof}[Proof of Theorem \ref{thm brakke inequality}]

  \step{Step 1: Time-freezing for $\zeta$.}
  We claim that it is enough to prove
  \begin{align}
      \sum_{i,j} \sigma_{ij}\int_0^{\tilde T} \int& \left( \zeta\, |H|^2 - H\cdot \nabla \zeta \right) \dS dt\notag\\
      \leq&2 \sum_{i,j} \sigma_{ij}\int \zeta  \dS\Big|_{t=0} \notag \\
   &     -2 \sum_{i,j} \sigma_{ij} \int \zeta  \dS \Big|_{t=\tilde T}\label{brakke inequality const test}
  \end{align}
  for any time-independent, strictly positive test function $\zeta=\zeta(x)>0$ and a.e.\ $\tilde T$.
  
This is a standard approximation argument:  In order to reduce (\ref{brakke inequality int}) to (\ref{brakke inequality const test})
  we fix a time-dependent test function $\zeta=\zeta(t,x)\geq 0$ and two time instances $0\leq s< t$.
  It is no restriction to assume $s=0$. Writing $t=:\tilde T$ for the time horizon we take a regular partition $0=T_0< \dots < T_M = \tilde T$ of the interval $(0,\tilde T)$ of fineness $\tau=\tilde T/M$.
  We write $\zeta_M$ for the piecewise constant interpolation of $\zeta$ plus a small perturbation $\frac1M$ so that $\zeta_M \geq \frac1M >0$:
  \begin{align*}
    \zeta_M(t) := \zeta(T_{m-1}) + \frac1M\quad \text{if } t\in [T_{m-1},T_m).
  \end{align*}
  Writing $\partial^{-\tau}\zeta_M (t) := \frac1\tau \left( \zeta_M(t) - \zeta_M(t-\tau)\right)$ for the discrete (backwards) time derivative we have
  \begin{align}\label{etaM to eta}
    \zeta_M \to \zeta,\quad   \nabla \zeta_M \to \nabla \zeta \quad \text{and} \quad \partial^{-\tau} \zeta_M \to \partial_t \zeta \quad \text{uniformly as } M\to \infty.
  \end{align}
  Using \eqref{brakke inequality const test} for $\zeta_M\geq \frac1M >0$ on each interval $[T_{m-1},T_m)$ and summing over $m$ we obtain \eqref{brakke inequality int}.
  \step{Step 2: Proof of (\ref{brakke inequality const test}).}
  Given a test function $\zeta=\zeta(x)>0$, we want to prove (\ref{brakke inequality const test}) for a.e.\ $\tilde T>0$.
  For simplicity, we may assume that $\tilde T=Nh$ is a multiple of the time step size $h$.
  Furthermore by \eqref{conv_ass pointwise} we may assume that
  $E_h(\chi^h(\tilde T))\to E(\chi(\tilde T))$.
  We pass to the limit in the approximate Brakke inequality (\ref{brakke inequality discr}) to prove Brakke's inequality \eqref{brakke inequality const test} for this time-independent test function.
  
  By \eqref{conv_ass} we may apply Proposition \ref{prop H and dE} to obtain
  \begin{align*}
    \frac{c_0}4 \sum_{i,j} \sigma_{ij} \int_0^{\tilde T} \int \zeta  \,&|H|^2 \dS  dt\\
    &\leq \liminf_{h\to 0} \frac h2 \sum_{n=1}^N \left| \partial E_h(\,\cdot\,, \chi^{n-1};\zeta) \right|^2(\chi^n),
  \end{align*}
  as well as
  \begin{align*}
    \frac{c_0}4 \sum_{i,j} \sigma_{ij} \int_0^{\tilde T} \int \zeta \,&|H|^2 \dS dt\\
    &\leq \liminf_{h\to 0}  \frac12 \int_0^{\tilde T} \left| \partial E_h(\,\cdot\,, \chi^h(t);\zeta) \right|^2(u^h(t))\, dt.
  \end{align*}
  In addition, we may apply Proposition \ref{prop sum F} for the transport term and after division by the common prefactor $c_0$ we obtain (\ref{brakke inequality const test}).
  \end{proof}
    
  \begin{proof}[Proof of Lemma \ref{lem MM interpretation}]
     Given initial conditions $\chi\in\{0,1\}^\numphases$ with $\sum_i \chi_i =1$ and a time-step size $h>0$, one iteration of the thresholding scheme yields $\chi^1_i = \chara_{\{(\sigma G_h\ast \chi)_i = \min_j (\sigma G_h\ast \chi)_j\}}.$ 
    Then $\chi^1$ clearly minimizes
    \begin{align*}
    2 u \cdot \sigma \, G_h\ast \chi
    \end{align*}
    among all $u\in [0,1]^P$ s.t.\ $\sum_{i} u_i =1$.
    This expression is equal to
    \begin{align*}
   u &\cdot \sigma \, G_h\ast u 
   - (u-\chi) \cdot \sigma \,G_h\ast (u -\chi)  
    +  
		u\cdot \sigma \, G_h \ast \chi - \chi \cdot \sigma \, G_h \ast u   +\chi \cdot \sigma \,G_h\ast \chi\\
    &=  u \cdot \sigma \, G_h\ast u 
   - (u-\chi) \cdot \sigma \,G_h\ast (u -\chi)  +   (u-\chi)\cdot \sigma \, G_h \ast \chi - \chi \cdot \sigma \, G_h \ast (u-\chi)   +\chi \cdot \sigma \,G_h\ast \chi,
    \end{align*}
    where the last right-hand side term is independent of $u$ and thus irrelevant for the minimization.
    Multiplying with $\zeta\geq 0$ and integrating shows that $\chi^1$ minimizes 
    \[
     \int \!\zeta \left[u\cdot \sigma  \,G_h\ast u - \left(u - \chi\right)\cdot \sigma \, G_h\ast \left(u -\chi\right) +(u-\chi) \cdot \sigma \, G_h\ast \chi
      -\chi \cdot \sigma \, G_h\ast \left(u-\chi\right) \right] dx + \text{const.}
    \]
    Dividing by $\h$, recalling the definitions \eqref{def local dist} and \eqref{def local E} of the localized distance and energy, and using the semi-group and symmetry properties of the kernel and the symmetry of $\sigma$ yield \eqref{E F d2}.  
    \end{proof}

Corollary \ref{cor brakke inequality discr} is an immediate consequence of interpreting our problem from the point of view of gradient flows in metric spaces.
    
    Given $\chi$ and $\zeta$, the \emph{Moreau-Yosida approximation} $E_{h,t}$ of $E_h$ is defined by
      \begin{align*}
      E_{h,t}(\chi;\zeta) :=  \min_{u} \left\{ E_h(u,\chi;\zeta) + \frac1{2t} \lauxbrakkedist_h^2(u,\chi;\zeta)\right\}
      \end{align*}
      and furthermore we recall the (not necessarily unique) \emph{variational interpolation} $u^h(t)$ of $\chi$ and $\chi^1:=u^h(h)$, cf.\ \eqref{def var intpol}.
    
      As $t$ decreases we have a stronger penalization and thus we expect $u^h(t)$ to be ``closer'' to $\chi=u^h(0)$ than $\chi^1=u^h(h)$ which justifies the name ``interpolation''.
      Note that $E_h(u,\chi;\zeta)$ and $\lauxbrakkedist_h(u,\chi;\zeta)$ are, because of the smoothing property of the kernel $G_h$, weakly continuous in $u$ and $\chi$.
      Furthermore, we recall that we choose $u^h(\,\cdot\,)$ in such a way that it is continuous in $t$ w.r.t.\ the metric $\lauxbrakkedist_h$.
      
      The following general theorem monitors the evolution of the (approximate) energy along the interpolation $u^h(t)$ in terms of the
    distances at different time instances measured by the metric $\lauxbrakkedist_h$, and gives a
    lower bound in terms of the local slope $|\partial E_h|$ of $E_h$, cf.\ \eqref{def local slope}.
    
       Because of the localization, our energy \eqref{def local E} depends on the configuration at the previous time step.
    However, we can apply the abstract framework (cf.\ Chapter 3 of \cite{ambrosio2006gradient}) to this case if we only follow one time step.
    Both $h$ and $\zeta$ are fixed parameters when applying these results.

    \begin{thm}[Theorem 3.1.4 and Lemma 3.1.3 in \cite{ambrosio2006gradient}]\label{thm dE/dt}
    For every $\chi \colon \torus \to \{0,1\}^\numphases$ with $\sum_i \chi_i =1$ a.e.\ the map $t\mapsto E_{h,t}(\chi;\zeta)$ is locally Lipschitz in $(0,h]$ and continuous in $[0,h]$ with
    \begin{align}\label{energy equality}
     &\frac t2 \left|\partial E_h(\,\cdot\,,\chi;\zeta)\right|^2 (u^h(t)) + \frac12 \int_0^t \left|\partial E_h(\,\cdot\,,\chi;\zeta)\right|^2(u^h(s))\, ds \notag\\
     &\qquad \leq
     \frac1{2t} \lauxbrakkedist_h^2(u^h(t),\chi;\zeta) + \int_0^t \frac{\lauxbrakkedist_h^2(u^h(s),\chi;\zeta)}{2 s^2} ds  =
     E_h(\chi,\chi;\zeta) - E_h(u^h(t),\chi;\zeta). 
    \end{align}
    \end{thm}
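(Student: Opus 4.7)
The plan is to deduce both halves of \eqref{energy equality} from the minimality property defining $u^h(t)$: the right-hand equality by differentiating the Moreau--Yosida value $\tau\mapsto E_{h,\tau}(\chi;\zeta)$ via an envelope argument, and the left-hand slope inequality by comparing $u^h(s)$ with nearby competitors and exploiting the triangle inequality for the localized (semi-)metric $\lauxbrakkedist_h(\cdot,\cdot;\zeta)$ granted by Lemma \ref{lem MM interpretation}.

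For the right-hand equality, the key observation is that for every fixed $u$ the map $\tau\mapsto E_h(u,\chi;\zeta) + \frac{1}{2\tau}\lauxbrakkedist_h^2(u,\chi;\zeta)$ is affine in the variable $1/\tau$, so $\tau\mapsto E_{h,\tau}(\chi;\zeta)$ is an infimum of affine functions of $1/\tau$ and hence concave in $1/\tau$; this in particular implies local Lipschitz continuity on $(0,h]$. A Danskin-type envelope argument (upper bound by using $u^h(\tau)$ as a competitor at a nearby parameter $\tau'$, lower bound by using $u^h(\tau')$ as a competitor at $\tau$) then gives, at every differentiability point,
\[
\frac{d}{d\tau} E_{h,\tau}(\chi;\zeta) \;=\; -\frac{1}{2\tau^2}\lauxbrakkedist_h^2(u^h(\tau),\chi;\zeta).
\]
Continuity at $\tau=0^+$ is obtained from testing the infimum with $u=\chi$, which yields $\lauxbrakkedist_h^2(u^h(\tau),\chi;\zeta)\lesssim \tau$, hence $u^h(\tau)\to\chi$ in the semi-metric; the smoothing by $G_h$ in the definition of $E_h(\cdot,\chi;\zeta)$ then forces $E_h(u^h(\tau),\chi;\zeta)\to E_h(\chi,\chi;\zeta)$. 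Integrating the derivative identity on $(0,t)$ and combining with the tautology $E_{h,t}(\chi;\zeta) = E_h(u^h(t),\chi;\zeta) + \frac{1}{2t}\lauxbrakkedist_h^2(u^h(t),\chi;\zeta)$ yields the right-hand equality in \eqref{energy equality}.

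For the slope inequality, I fix $s\in(0,h]$ and an arbitrary competitor $v$ and use minimality of $u^h(s)$ to write
\[
E_h(u^h(s),\chi;\zeta) - E_h(v,\chi;\zeta) \;\leq\; \frac{1}{2s}\bigl(\lauxbrakkedist_h^2(v,\chi;\zeta) - \lauxbrakkedist_h^2(u^h(s),\chi;\zeta)\bigr).
\]
Squaring the triangle inequality $\lauxbrakkedist_h(v,\chi;\zeta)\leq \lauxbrakkedist_h(v,u^h(s);\zeta)+\lauxbrakkedist_h(u^h(s),\chi;\zeta)$ and rearranging gives
\[
\lauxbrakkedist_h^2(v,\chi;\zeta) - \lauxbrakkedist_h^2(u^h(s),\chi;\zeta)\;\leq\; \lauxbrakkedist_h^2(v,u^h(s);\zeta) + 2\lauxbrakkedist_h(u^h(s),\chi;\zeta)\,\lauxbrakkedist_h(v,u^h(s);\zeta).
\]
Dividing by $\lauxbrakkedist_h(v,u^h(s);\zeta)$ and letting $v\to u^h(s)$ in the metric produces $|\partial E_h(\cdot,\chi;\zeta)|(u^h(s))\leq \frac{1}{s}\lauxbrakkedist_h(u^h(s),\chi;\zeta)$. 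Squaring this pointwise bound and inserting it into the right-hand side of the equality established above completes \eqref{energy equality}.

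The only real subtlety is that $u^h(\tau)$ need not be unique, so the envelope identity for $\frac{d}{d\tau}E_{h,\tau}(\chi;\zeta)$ only holds at differentiability points of the concave function $\tau\mapsto E_{h,\tau}(\chi;\zeta)$; however, concavity forces differentiability off a measure-zero set, which is amply sufficient for integration. The continuous-in-$t$ selection of $u^h$ recalled after \eqref{def var intpol} is then used purely to make the boundary term $\frac{t}{2}|\partial E_h(\cdot,\chi;\zeta)|^2(u^h(t))$ and the integrand in the left-hand side of \eqref{energy equality} unambiguously defined; uniqueness of the variational interpolation plays no further role.
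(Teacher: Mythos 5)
Your proposal is correct and follows essentially the same route as the paper, which cites Theorem 3.1.4 and Lemma 3.1.3 of Ambrosio--Gigli--Savar\'e and sketches precisely your two steps: the envelope identity $\frac{d}{d\tau}E_{h,\tau}(\chi;\zeta)=-\frac{1}{2\tau^2}\lauxbrakkedist_h^2(u^h(\tau),\chi;\zeta)$ obtained by testing minimality at nearby parameters and then integrating, and the slope bound $|\partial E_h(\,\cdot\,,\chi;\zeta)|(u^h(s))\leq \frac{1}{s}\lauxbrakkedist_h(u^h(s),\chi;\zeta)$ from the definition of the local slope plus the triangle inequality for the localized semi-metric. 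Your additional observations (concavity in $1/\tau$, differentiability a.e., the role of non-uniqueness, and continuity at $\tau=0^+$, which implicitly uses that $\zeta$ is strictly positive as it is in all applications in the paper) are consistent with the abstract argument being invoked.
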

    
    The idea behind Theorem \ref{thm dE/dt} is rather simple: By testing the minimality of $u^h(t)$ against $u^h(s)$ and taking $s\uparrow t$ (and similarly with reversed roles for $s\downarrow t$) one obtains $\frac d{dt} E_{h,t}(\chi;\zeta) = - \frac{\lauxbrakkedist_h^2(u^h(t),\chi;\zeta)}{2 t^2}$. Integrating this equation from $t=0$ to $t=h$ yields the equality between the energy difference and the the metric term in \eqref{energy equality}.
    The first inequality between the local slope and the metric term comes from the general estimate  $\left|\partial E_h(\,\cdot\,,\chi;\zeta)\right|(u^h(t)) \leq \frac{\lauxbrakkedist_h(u^h(t),\chi;\zeta)}{t}$, which follows from the definition of the local slope and the triangle inequality.
    
  \begin{proof}[Proof of Corollary \ref{cor brakke inequality discr}]
     We apply \eqref{energy equality} in Theorem \ref{thm dE/dt} with $\chi=\chi^{n-1}$ and $t=h$, and sum over $n=1,\dots,N$.
  \end{proof}
  
  Now we turn to the more problem-specific statements, which use the special character of thresholding.
  
 \begin{proof}[Proof of Corollary \ref{cor energy dissipation estimate}]
	The statement simply follows from testing the global minimization problem \eqref{MM 				interpretation} for $\chi^n$ with its predecessor $\chi^{n-1}$ and summation over $n$.
 \end{proof}

  \begin{proof}[Proof of Proposition \ref{prop dE and dE old}]
  The first variation of $E_h$ at $u$ along the vector field $\xi$ defined through \eqref{def delta E} and \eqref{def us} is given by
  \begin{align}
    \delta E_h(u,\xi) 
    = &\frac1\h \int -\left(\xi\cdot\nabla\right) u \cdot \sigma \,G_h\ast u
  -u \cdot \sigma \, G_h \ast \left(\left(\xi\cdot\nabla\right) u\right)dx  \label{dE without any tricks}\\
  = & \frac1\h \int u \cdot \sigma \left(\xi\cdot\nabla\right) G_h \ast u 
  - u\cdot \sigma  \left(\nabla   G_h \ast \left(\xi \otimes u\right) \right)dx	\notag\\
  &+ \frac1\h \int \left( \nabla\cdot \xi\right) u \cdot \sigma \,  G_h \ast  u
  + u\cdot \sigma \,  G_h \ast \left(\left(\nabla\cdot\xi\right) u\right)dx.\notag
  \end{align}
  This can be compactly rewritten as
  \[
      \delta E_h(u,\xi) = \frac1\h \int 2 \left(\nabla \cdot \xi \right) u\cdot \sigma \, G_h\ast u +  u \cdot \sigma \left[\xi \cdot,\nabla G_h\ast \right] u 
      - u \cdot \sigma \left[ \nabla \cdot \xi, G_h\ast\right] u \,dx.
  \]
  Componentwise in $u$, we expand the first commutator:
  \begin{align*}
  \left(\left[ \xi \cdot, \nabla G_h\ast \right] u_i\right) (x)  &= \int \left( \xi(x)-\xi(x-z)\right) \cdot \nabla G_h(z) \,u_i(x-z)\,dz\\
  &= \nabla \xi(x) \colon \int - \frac{z}{\h}\otimes \frac{z}{\h} G_h(z)\, u_i(x-z)\,dz
  + O\left( \|\nabla^2 \xi \|_\infty \big(\h \,k_h\ast u_i \big)(x) \right),
  \end{align*}
  where we used the identity $\nabla G(z)=-G(z)z$ and where the kernel $k_h$ is given by the mask $k(z)=|z|^3 G(z)$ and can be controlled by a Gaussian with slightly larger variance $k(z) \lesssim G(z/2)$.
  Likewise, the second commutator can be estimated pointwise by 
  \[
  \left|\left[ \nabla \cdot \xi, G_h\ast\right]u _i\right| \lesssim \|\nabla^2\xi\|_\infty\h \,\tilde k_h\ast  u_i,
  \]
   where $\tilde k_h$ is given by the mask $\tilde k(z)=|z|\, G(z)\lesssim G(z/2)$.
  By the identity $G(z)\left( Id-z\otimes z\right)=-\nabla^2 G(z) $ we indeed obtain \eqref{dE and dE old} with an error of order $\|\nabla^2 \xi\|_\infty \h \, E_{4h}(u)$, which by the monotonicity \eqref{approximate monotonicity} of $E_h$ yields the claim.
  \end{proof}
  
  \begin{proof}[Proof of Proposition \ref{prop sum F}]
  We first note that by definition \eqref{def local E},
  \begin{align*}
    E_h(\chi^n,\chi^{n-1};\zeta) - E_h(\chi^n,\chi^n;\zeta) =&\frac1\h \int   \left(\chi^n-\chi^{n-1}\right) \cdot \sigma \left[\zeta,G_h\ast\right] \chi^{n-1} dx \\
	  & - \frac1\h \int   \left(\chi^n-\chi^{n-1}\right) \cdot \sigma \left[\zeta,G_{h/2}\ast\right] G_{h/2}\ast \left(\chi^n- \chi^{n-1}\right)   dx.
  \end{align*}
  By the antisymmetry of the commutator (and the symmetry of $\sigma$), we may replace $\chi^{n-1}$ by $\chi^n$ on the right-hand side:
  \[
  \frac1\h \int   \left(\chi^n-\chi^{n-1}\right)\cdot \sigma  \left[\zeta,G_h\ast\right]  \chi^{n}  - \left(\chi^n-\chi^{n-1}\right) \cdot \sigma  \left[\zeta,G_{h/2}\ast\right] G_{h/2}\ast \left(\chi^n- \chi^{n-1}\right)   dx.
  \]
  Now we prove the proposition in two steps.
  First, we show that the first term converges to the right-hand side of the claim:
    \begin{equation}\label{claim transport term}
    \begin{split}
     \lim_{h\to0} \int_0^T \int   &\partial_t^{-h} \chi^h \cdot \sigma \, \frac1\h\left[\zeta,  G_h\ast\right] \chi^{h}  dx\\
      &=  
	    c_0 \sum_{i,j} \sigma_{ij} \int_0^T \int  \nabla^2 \zeta\colon \left( Id-\nu_i\otimes\nu_i \right)\dS dt,
	    \end{split}
    \end{equation}
  where $\partial_t^{-h} \chi^h = \frac{\chi^h -\chi^h(\,\cdot\,-h)}{h}$ denotes the discrete backwards time derivative of $\chi^h$.
  Then we prove that the second term is negligible:
  \begin{align}\label{error transport term}
      \lim_{h\to0}\int_0^T \h \int   \partial_t^{-h} \chi^h \cdot \sigma  \left[\zeta,G_{h/2}\ast\right] G_{h/2}\ast \partial_t^{-h} \chi^h \,   dx\, dt =0.
  \end{align}
  
  \step{Step 1: Argument for \eqref{claim transport term}.}  
    Expanding the commutator to second order
    \begin{equation}
      \frac1\h\left[\zeta, G_h\ast\right]v =   \h \nabla G_h \ast \left(- \nabla \zeta \,v\right) + \frac\h2 \left( G_h\, Id + h\nabla^2 G_h\right) \ast \left( \nabla^2 \zeta\, v\right)
      +  O\left(\|\nabla^3\zeta\|_\infty h \, k_h \ast |v|\right), \label{commutator estimate G}
    \end{equation}
    where the kernel $k_h$ is given by the mask $k(z)=|z|^3 G(z)$,
    we obtain for the first-order term
    \begin{equation*}
    \begin{split}
    &h\sum_{n=1}^N  \int   \frac{\chi^n-\chi^{n-1}}{h}  \cdot \sigma  \h \nabla G_h\ast \left(  -\nabla \zeta\,\chi^{n} \right)\, dx\\
    &\qquad \qquad \qquad=  h\sum_{n=1}^N \frac1\h \int \left(\chi^n-\chi^{n-1} \right) \cdot \sigma \,G_h \ast\left( -\left(\nabla \zeta \cdot \nabla \right) \chi^n -\Delta \zeta \left(1-\chi^{n}\right)\right) dx.
    \end{split}
    \end{equation*}
    Now we recognize the first variation of the (unlocalized) dissipation functional, cf.\ \eqref{def diss}, on the right-hand side:
    \[
    \delta \left( \frac1{2h} \lauxbrakkedist_h^2(\,\cdot\,,\chi^{n-1}) \right)(\chi^{n},\xi) = -\frac2\h \int \left(\chi^n-\chi^{n-1} \right) \cdot \sigma \, G_h \ast \left(- \left(\xi \cdot \nabla \right)\chi^n \right) dx
    \]
    with $\nabla \zeta $ playing the role of $\xi$.
    Using the semi-group and symmetry properties of the kernel, the extra term involving the Laplacian of the test function can be estimated by Jensen's inequality and the energy-dissipation estimate \eqref{energy dissipation estimate}:
    \begin{align} \label{d2zeta}
		&\left|h\sum_{n=1}^N \frac1\h \int \left(\chi^n-\chi^{n-1} \right)\cdot \sigma \, G_h \ast \left(\Delta \zeta \,\chi^{n}\right)dx\right| \\
		&\qquad \quad \lesssim \|\Delta \zeta\|_\infty \left( \frac T\h h\sum_{n=1}^N \frac1\h\int \left|G_{h/2}\ast\left(\chi^n-\chi^{n-1}\right) \right|_\sigma ^2dx\right)^{1/2} 
		\stackrel{\eqref{def diss},\eqref{energy dissipation estimate}}{\leq} \|\Delta \zeta\|_\infty T^{1/2} E_0^{1/2} h^{1/4}.\notag
    \end{align}
    Formally, the leading-order term, i.e., the first variation of the dissipation functional, converges to the transport term, which in the two-phase case is $-c_0 \int_\Sigma V\cdot \nabla \zeta$. Since instead we want to obtain
    the term $-\frac{c_0}2 \int_\Sigma H\cdot,\nabla \zeta $ (in its weak form $\frac{c_0}2 \int_\Sigma \nabla^2 \zeta \colon (Id-\nu\otimes \nu)$), we employ the minimizing movements interpretation \eqref{MM interpretation}
    in form of its Euler-Lagrange equation
    \begin{align*}
    \delta E_h(\chi^n,\xi) + \delta \left( \frac1{2h} \lauxbrakkedist_h^2(\,\cdot\,,\chi^{n-1}) \right)(\chi^{n},\xi) = 0 \quad \text{for all } \xi \in C^\infty(\torus,\R^d).
    \end{align*}
    We thus have
    \[
    h\sum_{n=1}^N  \int   \frac{\chi^n-\chi^{n-1}}{h}  \cdot \sigma \h \nabla G_h\ast \left(  \nabla \zeta \cdot \nabla \chi^{n} \right)\, dx
    =
    \frac h2\sum_{n=1}^N \delta E_h(\chi^{n},\nabla \zeta).
    \]
    By the convergence of the energies \eqref{conv_ass} we may apply Proposition \ref{prop conv G K} in \eqref{dE and dE old} and pass to the limit $h\to0$ in the right-hand side:
    \[
    \begin{split}
     \lim_{h\downarrow0}\frac12& \int_0^T \delta E_h(\chi^{h},\nabla \zeta)\,dt\\
    & = \frac{c_0}2 \sum_{i,j} \sigma_{ij}\int_0^T \int \nabla^2 \zeta \colon \left( Id - \nu_i \otimes \nu_i\right)\dS dt.
     \end{split}
    \]
    Indeed, by Lebesgue's dominated convergence theorem, we are allowed to interchange the order of integration in time and the limit $h\to0$.
    
    Now we conclude the argument for \eqref{claim transport term} by showing that the contributions of the second- and third-order terms in the expansion \eqref{commutator estimate G} are negligible in the limit $h\to0$.
    The contribution of the second-order term is estimated as follows. We argue componentwise in $\chi^h$, fix $i,j\in \{1,\dots, P\}$,  $i\neq j$ and observe that by Cauchy-Schwarz
    \begin{align*}
      & \int_0^T  \int   \partial_t^{-h} \chi^h_i  \frac\h2 \left( G_h\, Id + h\nabla^2 G_h \right)\ast \left(  \nabla^2 \zeta \,\chi_j^{h}\right)\, dx\, dt\\
      &\leq \left( \int_0^T  \h \int \left| \left( G_h\, Id + h\nabla^2 G_h \right)\ast \partial_t^{-h} \chi_i^h \right|^2dx \,dt\right)^{\!\!\frac12}\!
      \left( \int_0^T  \h \int \left| \nabla^2 \zeta \chi_j^h\right|^2dx \,dt\right)^{\!\!\frac12}\!.
    \end{align*}
    The second right-hand side integral is bounded by $T \Lambda^d \|\nabla^2 \zeta\|_\infty^2 \h \to0$, while the first right-hand side integral can be estimated by
    \[
      \h \int \left| \left( G_h\, Id + h\nabla^2 G_h \right)\ast \partial_t^{-h} \chi_i^h \right|^2dx \lesssim 
      \h \int \left( G_{h/2}\ast \partial_t^{-h} \chi_i^h \right)^2 dx \stackrel{\eqref{sigma<0},\eqref{energy dissipation estimate}}{\lesssim}  E_h(u^0),
    \]
    where in the first estimate we have used the semi-group property $G_h\, Id + h\nabla^2 G_h =( G_{h/2}\, Id + h\nabla^2 G_{h/2} )\ast G_{h/2}$ and the fact that the kernel $G_{h/2}\, Id + h\nabla^2 G_{h/2}$ is uniformly bounded in $L^1$.
    
    Since the kernel $k_h$ is uniformly bounded in $L^1$, the contribution of the third-order term in \eqref{commutator estimate G} is controlled by
    \[
    \int_0^T \int h \left| \partial_t^{-h} \chi^h \right| dx\,dt = \int_0^T \int \left|\chi^h(t)-\chi^h(t-h)\right|dx\,dt.
    \]
    The following basic estimate, which is valid for any pair of characteristic functions,
    \begin{equation}\label{chi-chitilde}
     \left|\chi-\tilde \chi\right| =  \left|\chi-\tilde \chi\right| ^2 \lesssim 
     \left|G_{h/2}\ast \left(\chi-\tilde \chi\right) \right|^2 + \left|G_{h/2}\ast \chi - \chi\right|^2 + \left|G_{h/2}\ast\tilde \chi -\tilde \chi\right|^2,
    \end{equation}
    and the fact that by the normalization $\int G_{h/2}(z)\,dz=1$ and the pointwise estimate $G_{h/2}(z) \lesssim G_h(z)$, for each $i\in \{1,\dots,\numphases\}$, we have
    \begin{align}
		\frac1\h \int \left| G_{h/2}\ast \chi_i -\chi_i\right| dx
		&\leq \frac1\h  \int G_{h/2}(z) \int \left|\chi_i(x)-\chi_i(x-z)\right| dx\,dz \notag\\
		&\lesssim \frac1\h \int G_{h}(z) \int \left|\chi_i(x)-\chi_i(x-z)\right|dx\,dz \notag\\
		&= \frac1\h \int G_{h}(z) \int (1-\chi_i)(x)\chi_i(x-z) + (1-\chi_i)(x-z) \chi_i(x) \,dx\,dz\notag\\
		&= \frac2\h \sum_{1\leq j \leq \numphases, j\neq i} \int \chi_i G_h\ast\chi_j \,dx \leq \frac{1}{\min_{i\neq j} \sigma_{ij}} E_h(\chi)\notag
    \end{align}
    yield the estimate
    \[
     \int_0^T \int \left|\chi^h(t)-\chi^h(t-h)\right|dx\,dt \lesssim \left(1+T\right) E_0 \h\to0.
    \]
    This concludes the proof of \eqref{claim transport term}.
    
    \step{Step 2: Argument for \eqref{error transport term}.}
    We may argue componentwise and omit the index in the following. We expand the commutator to first order
    \begin{equation}
    \left[ \zeta,G_{h/2}\ast\right] v =  \nabla G_{h/2} \ast \left(-\frac{h}{2} \nabla \zeta \, v\right) + O\left( \|\nabla^2 \zeta\|_\infty  h\, k_h\ast |v|\right),
     \label{commutator estimate G1}
    \end{equation}
    where the kernel $k_h$ is given by the mask $k(z)=|z|^2 G_{1/2}(z)$,
    and first consider the contribution of the first-order term to \eqref{error transport term}, namely
    \[
      -\frac h2 \int_0^T \h \int \partial_t^{-h} \chi^h \nabla G_{h/2} \ast \left( \nabla \zeta \, G_{h/2} \ast \partial_t^{-h} \chi^h \right) dx\,dt.
    \]
    Using the antisymmetry of $\nabla G$, the chain rule and integration by parts this is equal to
    \begin{align*}
    &\frac h2 \int_0^T \h \int \nabla \left(G_{h/2} \ast \partial_t^{-h}\chi^h \right) \cdot \nabla \zeta \left( G_{h/2} \ast \partial_t^{-h} \chi^h \right)dx\,dt\\
    &\qquad= \frac h2 \int_0^T \h \int  \nabla \zeta \cdot \nabla \left( \frac12 \left(G_{h/2} \ast \partial_t^{-h} \chi^h \right)^2\right)dx\,dt\\
    &\qquad= -\frac h4 \int_0^T \h \int  \Delta \zeta  \left( G_{h/2} \ast \partial_t^{-h} \chi^h \right)^2 dx\,dt.
    \end{align*}
    By the energy-dissipation estimate \eqref{energy dissipation estimate} this term vanishes as $h\to0$.
    
    The contribution of the second-order term coming from the expansion \eqref{commutator estimate G1} is controlled by
    \begin{align*}
      \int_0^T \h \int \left|\partial_t^{-h} \chi^h \right| h\, k_h\ast \left| G_{h/2}\ast \partial_t^{-h} \chi^h \right| dx \,dt
    \lesssim \int_0^T \h \int   \left|G_{h/2}\ast \partial_t^{-h} \chi^h \right| dx \,dt.
    \end{align*} 
    Therefore, this term  vanishes as $h\to0$ by Jensen's inequality and the energy-dissipation estimate \eqref{energy dissipation estimate}.
  \end{proof}
  
 \begin{proof}[Proof of Corollary \ref{cor a priori estimate}] 
 	In contrast to the piecewise constant interpolation $\chi^h$, the variational interpolation 		$u^h$ is not given in an explicit form but only by the minimization problem \eqref{def var intpol}. In particular, since in general $u^h$ may depend on the test function $\zeta$, we are tied to the local minimization problem \eqref{def var intpol}.
 	By \eqref{energy equality} we have in particular 
 	 \begin{align*}
 	 	 E_h(u^h(\tilde T),u^h(\tilde T);\zeta)+ \int_0^{\tilde T} \frac{\lauxbrakkedist_h^2(u^h,\chi^h;\zeta)}{2h^2}dt 
 	 	\leq E_h(\chi^0,\chi^0;\zeta)- \sum_{n=1}^N \left( E_h(\chi^n,\chi^{n-1};\zeta)-E_h(\chi^n,\chi^n;\zeta) \right)
 	 \end{align*}
 	 for any $\tilde T\in[Nh,(N+1)h)$, where $N \in \N$.
 	 The left-hand side is bounded from below by
 	 \[
		\inf \zeta  \left(E_h(u^h(\tilde T)) + \int_0^{\tilde T}  \frac{\lauxbrakkedist_h^2(u^h,\chi^h)}{2h^2}dt \right)
 	 \]
 	 while the right-hand side can be controlled by Proposition \ref{prop sum F}.
 \end{proof}
   
 \begin{proof}[Proof of Lemma \ref{la conv ass for var int}]
  
  The convergence assumption \eqref{conv_ass} together with the $\liminf$-inequality of the $\Gamma$-convergence implies the convergence $E_h(\chi^h)\to E(\chi)$ in $L^1(0,T)$.
  In order to understand the behavior of the energies of the variational interpolations $u^h$ we compare them to the energies of the piecewise constant interpolation:
  \begin{align*}
  	|E_h(u^h)-E_h(\chi^h)  | 
  	&= \frac1\h \left| \int \left( G_{h/2} \ast u^h\cdot \sigma\, G_{h/2}\ast ( u^h-\chi^h) + 
  	G_{h/2} \ast (u^h-\chi^h) \cdot \sigma \, G_{h/2}\ast \chi^h\right) dx \right|\\
  	&\lesssim \frac1\h \int \left| G_{h/2}\ast (u^h-\chi^h) \right|_\sigma dx
  \end{align*}
  and by Jensen we obtain
  \[
  	\int_0^T|E_h(u^h)-E_h(\chi^h)  |\,dt \lesssim  T^{1/2} \frac1\h \left(\int_0^T \int \left|G_{h/2}\ast (u^h-\chi^h)\right|_\sigma^2dx\,dt \right)^\frac12,
  \]
  which by \eqref{quantitative closeness} vanishes as $h\to0$.
  That means the approximate energies converge to the same limit in $L^1(0,T)$
   and therefore we obtain the $L^1$-convergence \eqref{conv_ass var int} and -- after the possible passage to a further subsequence -- the pointwise convergences \eqref{conv_ass pointwise} and \eqref{conv_ass var int pointwise}.
   
   The convergence of $u^h$ to $\chi=\lim \chi^h$ can now be proven by the very same argument as the one following \eqref{chi-chitilde}. The only difference here is that the components $u^h_i$ are not characteristic functions. 
   Then the first equality in \eqref{chi-chitilde} can be replaced by the inequality $|u-\chi| \leq 2 |u-\chi|$ and in the chain of inequalities following \eqref{chi-chitilde}, the equality $|\chi_i(x)-\chi_i(x-z)|=(1-\chi_i)(x)\chi_i(x-z) + (1-\chi_i)(x-z) \chi_i(x)$ can simply be replaced by the inequality $|u_i(x)-u_i(x-z)| \leq (1-u_i)(x)u_i(x-z) + (1-u_i)(x-z) u_i(x)$, which is valid for any function $u_i $ with values in $[0,1]$. Together with \eqref{energy dissipation estimate interpolation}, this leads to the estimate
   \[
	\limsup_{h\to0} \frac1\h \int_0^T \int \left| u^h-\chi^h\right| dx\,dt  <\infty,
   \]
   so that indeed $u^h\to \chi$ in $L^1$.
  \end{proof}
  
 \begin{proof}[Proof of Proposition \ref{prop H and dE}]
  
  We give ourselves a test vector field $\xi$ and let the variations $u_s$ defined in \eqref{def us} play the role of $v$ in the definition of the local slope \eqref{def local slope} so that we obtain the inequality
  \begin{align*}
    |\partial E_h(\,\cdot\,,\chi^h;\zeta)|(u^h) \geq \limsup_{s\to 0} \frac{\left(E_h(u^h,\chi^h;\zeta)-E_h(u^h_s,\chi^h;\zeta)\right)_+}{\lauxbrakkedist_h(u^h_s,u^h;\zeta)}.
  \end{align*}
  As $s\to 0$ we expand the numerator in the following way
  \begin{align*}
    E_h(u^h_s,\chi^h;\zeta) = E_h(u^h,\chi^h;\zeta) + s \,\frac{d}{ds}\Big|_{s=0} E_h(u^h_s,\chi^h;\zeta) +o(s) \quad \text{as } s\to 0.
  \end{align*}
  For the denominator we have, cf.\ \eqref{def diss},
  \begin{align*}
    \frac1{2h}\lauxbrakkedist_h^2(u^h_s,u^h;\zeta) = \frac{s^2}\h  \int \zeta \left| G_{h/2}\ast\left(\left(\xi \cdot \nabla\right) u^h\right)\right|_\sigma^2 dx + o(s^2) \quad \text{as } s\to 0.
  \end{align*}
  Taking the limit $s\to 0$ we obtain
  \begin{equation}\label{lower bound dE}
    |\partial E_h(\,\cdot\,,\chi^h;\zeta)|(u^h) \geq \frac{\frac{d}{ds}\big|_{s=0} E_h(u^h_s,\chi^h;\zeta)}{\sqrt{2\h  \int \zeta \left| G_{h/2}\ast\left(\left(\xi \cdot \nabla\right) u^h\right)\right|_\sigma^2 dx}}.
  \end{equation}
Now we expand $\zeta$ and $\xi$  to analyze the leading order terms as $h\to0$.
Using \eqref{def us} we compute the first variation of the localized energy $E_h(u,\chi;\zeta)$, cf.\ \eqref{def local E}:
  \begin{align*}
      \frac{d}{ds}\Big|_{s=0}E_h& (u_s,\chi;\zeta)\\
      = \frac1\h \! \int \!\!&- \zeta \left(\xi \cdot \nabla\right) u \cdot \sigma \, G_h\ast u - \zeta \, u\cdot \sigma \, G_h \ast \left( \left(\xi \cdot \nabla\right) u\right) \\
      &-  \left(\xi \cdot \nabla\right) u \cdot \sigma \left[ \zeta, G_h\ast\right] u 
      + \left(\xi \cdot \nabla\right) u  \cdot \sigma \left[ \zeta,G_h\ast\right] \left( u-\chi \right)\\
      &+ \left(\xi \cdot \nabla\right) u \left[ \zeta, G_{h/2}\ast\right] \! G_{h/2} \ast ( u-\chi) - \left( \xi \cdot \nabla \right)u \cdot \sigma\,G_{h/2} \ast \!\left[ \zeta, G_{h/2}\ast\right]\! \left(u-\chi\right) dx.
  \end{align*}
  The fourth term in the sum comes from replacing $\chi$ by $u$ in the third term, while for the last term we used the antisymmetry 
  $\int u \left[ \zeta, G_{h/2}\ast\right]v \,dx= - \int v \left[ \zeta, G_{h/2}\ast\right]u \,dx $ (and the symmetry of $\sigma$).
  Note that due to the symmetry of $G$ there is a cancellation between the second and third term in this sum:
  \begin{align*}
    \int - \zeta \,u \cdot \sigma \, G_h \ast \left( \left(\xi \cdot \nabla\right) u\right) - \left( \xi \cdot \nabla\right) u \cdot \sigma  \left[ \zeta, G_h\ast\right] u\, dx
    = &\int -\zeta \left(\xi \cdot \nabla\right) u \cdot \sigma \, G_h\ast u\, dx\\
    =&\int - u \cdot \sigma \,G_h\ast \left(\zeta \left( \xi \cdot \nabla\right) u \right) dx.
  \end{align*}
  A direct computation based on the semi-group property $G_h = G_{h/2}\ast G_{h/2}$ yields
  \begin{equation}\label{commutator halbgruppe}
    \left[ \zeta, G_h\ast\right]v
    +\left[ \zeta, G_{h/2}\ast\right]G_{h/2}\ast v 
    - G_{h/2}\ast \left[ \zeta, G_{h/2}\ast\right]v 
    = 2 \left[ \zeta, G_{h/2}\ast\right]G_{h/2}\ast v
  \end{equation}
  so that the last three terms in the first variation of $E_h$ above can be combined using once more the antisymmetry of the commutator, and we get
  \begin{align}
    \frac{d}{ds}\Big|_{s=0}E_h (u_s,\chi;\zeta) = 
    &\frac1\h\int - \zeta \left(\xi \cdot \nabla \right)u\cdot \sigma \, G_h\ast u - u\cdot \sigma \,G_h\ast \left(\zeta\left( \xi \cdot \nabla \right) u \right)  dx \notag\\
    &-\frac2\h \int G_{h/2}\ast\left(u-\chi\right)\cdot \sigma \, \left[\zeta,G_{h/2}\ast\right] \left(\left(\xi \cdot \nabla \right)u  \right)dx.\label{d ds E}
  \end{align}
  Note that the first right-hand side integral is exactly $\delta E_h(u,\zeta\,\xi)$, the first variation of the energy along the ``localized'' vector field $\zeta\,\xi$, see  \eqref{dE without any tricks}.
  Now we plug $u=u^h$ into the above formula.
  Since by Lemma \ref{la conv ass for var int} $u^h\to \chi$ in $L^1$ and $E_h(u^h) \to E(\chi)$ for a.e.\ $t$,  using Proposition \ref{prop dE and dE old} with $\zeta\,\xi$ playing the role of $\xi$,  for a.e.\ $t$, along the sequence $u^h$ 
  the first right-hand side integral of \eqref{d ds E} converges to
  \[
      \delta E(\chi,\zeta\,\xi) 
      = c_0 \sum_{i,j} \sigma_{ij} \int \nabla \left( \zeta\, \xi\right) 
    \colon \left(Id- \nu_i\otimes \nu_i\right) \dS.
  \]

  We now give the argument that the second integral in \eqref{d ds E} is negligible:
  \begin{equation}\label{dE error term}
    \frac2\h \int G_{h/2}\ast\left(u-\chi\right) \cdot \sigma \left[\zeta,G_{h/2}\ast\right] \left( \left(\xi \cdot \nabla\right) u  \right)dx\to 0\quad \text{in } L^1(0,T).
  \end{equation}
   In view of \eqref{quantitative closeness} in Corollary \ref{cor a priori estimate}, by Cauchy-Schwarz it is enough to prove
  \begin{equation}\label{neu}
   \h \int_0^T\int \left| \left[\zeta,G_{h/2}\ast\right]\left(\xi \cdot \nabla u_i^h\right)\right|^2dx\,dt \to0
  \end{equation}
  for all $i=1,\dots,P$. We fix $i$ and omit the index in the following.
  Rewriting the commutator
  \begin{align*}
	\left[\zeta,G_{h/2}\ast\right]\left(\xi \cdot \nabla u^h\right)
   =\int  G_{h/2}(z) \left( \zeta(x)-\zeta(x-z)\right) \xi(x-z)\cdot \nabla u^h(x-z)\,dz
  \end{align*}
 and integrating by parts in $z$ we obtain the pointwise estimate
  \begin{align*}
   \left|\left[\zeta,G_{h/2}\ast\right]\left(\xi \cdot \nabla u^h\right)\right|
   \leq& \left|\int\nabla  G_{h/2}(z) \cdot \xi(x-z) \left( \zeta(x)-\zeta(x-z)\right) u^h(x-z) \,dz\right|\\
   &+ \left|\int G_{h/2}(z) \nabla_z \cdot \left[( \zeta(x)-\zeta(x-z))\xi(x-z)\right] u^h(x-z)\,dz \right|\\
   \lesssim & \|\nabla \zeta\|_\infty \|\xi\|_\infty+ \| \zeta\|_\infty \| \nabla \xi\|_\infty
  \end{align*}
  and hence \eqref{neu} holds with the rate $O\big((\|\nabla \zeta\|_{\infty} \|\xi\|_{\infty}
  +\| \zeta\|_{\infty} \|\nabla \xi\|_{\infty})^2 T \h\big)$.
  Therefore we have proven the following convergence of the first variation of the localized energy \eqref{def local E}:
  \begin{align}
    \lim_{h\to0}&\int_0^T \frac{d}{ds}\Big|_{s=0} E_h(u^h_s,\chi^h;\zeta) \,dt 
    = \lim_{h\to0} \int_0^T\delta E_h(u^h,\zeta\,\xi)\,dt \notag\\
    &= c_0\sum_{i,j} \sigma_{ij} \int_0^T\int \nabla\left(\zeta\,\xi\right) \colon \left(Id-\nu_i\otimes \nu_i\right) \dS dt.\label{limit of dE}
  \end{align}
  
  With the same methods we can handle the term in the expansion of the metric term $ \lauxbrakkedist_h(u^h_s,u^h;\zeta)$:
  We claim that
  \begin{align}
    \lim_{h\to0} 2 \h \int_0^T \int \zeta & \left|G_{h/2}\ast\left((\xi \cdot \nabla) u^h\right)\right|_\sigma^2 dx\,dt\notag\\  
   & = \lim_{h\to0}\frac2\h \sum_{i,j} \sigma_{ij} \int_0^T \int \zeta \left( \xi \otimes \xi\right) \colon u_i^h( h\nabla^2 G_h) \ast u_j^h\, dx \,dt \notag \\
    &= 2 c_0 \sum_{i,j}\sigma_{ij} \int_0^T  \int \zeta \left(\xi \cdot \nu_i \right)^2 \dS dt.\label{limit ddist}
  \end{align}
  To this end we plug $\left(\xi \cdot \nabla\right) u^h = \nabla \cdot (\xi u^h) - \left(\nabla \cdot \xi\right) u^h$ into the quadratic term on left-hand side
  and expand the square. First we note that only the term
  \begin{equation}\label{leading order}
    2 \h \int \zeta \left| G_{h/2}\ast\left(\nabla \cdot(\xi  u^h)\right)\right|_\sigma^2 dx = 2 \h \int \zeta \left| \nabla G_{h/2}\ast\left( \xi  \otimes u^h\right)\right|_\sigma^2 dx
  \end{equation}
  survives in the limit $h\to0$. Indeed, we have
  \[
     2 \h \int \zeta \left|G_{h/2}\ast\left((\nabla \cdot \xi)\, u^h\right)\right|_\sigma^2 dx \lesssim \|\nabla \xi\|_\infty^2 \h \int |\zeta|\,dx
  \]
  and the mixed term can be estimated by Young's inequality and the boundedness of the leading-order term which we will show now.
  Using the antisymmetry of $\nabla G$ we have
  \begin{align*}
   2\h \int \zeta  \big|\nabla  G_{h/2}\ast \big(\xi \otimes u^h \big) \big|_\sigma^2dx 
  & = 2\h \int u^h\cdot \sigma \left(\xi \cdot \nabla\right) G_{h/2}\ast  \big(\zeta\,\nabla  G_{h/2}\ast \big(\xi\, u^h \big) \big)\,dx.
  \end{align*}
  We now want to commute the multiplication with $\xi$ and the outer convolution and afterwards the multiplication with $\zeta\,\xi$ and the inner convolution. For this we use the $L^\infty$-commutator estimate
   \[
   \left\| \left[\xi, \nabla G_h\ast\right]u \right\|_\infty  \lesssim \|\nabla \xi \|_\infty \|u\|_\infty
   \]
   for the vector fields $\xi$ and $\zeta \xi$, which implies
    the $L^1$-estimate 
    \[
    \int \left| \nabla G_{h/2}\ast \left( \xi \, u_j^h \right) \right|dx \lesssim \|\nabla \xi \|_\infty
    + \|\xi\|_\infty \int \left| \nabla G_{h/2}\ast u_j^h\right|dx,
    \]
    and the a priori estimate \eqref{energy dissipation estimate interpolation} for the last term:
    \begin{equation}\label{nabla G energy}
     \limsup_{h\downarrow0} \int \left| \nabla G_{h/2}\ast u_j^h\right|dx 
    \lesssim \limsup_{h\downarrow0} E_h(u^h) 
    \stackrel{\eqref{energy dissipation estimate interpolation}}{<} \infty.
    \end{equation}
    For the first estimate in \eqref{nabla G energy} we exploited $\int \nabla G(z)\,dz=0$ as follows
    \begin{align*}
     \int \left| \nabla G_{h/2}\ast u_j^h\right|dx  
     &=\int \bigg| \int \nabla G_{h/2}(z) \left( u_j^h(x)-u_j^h (x-z)\right)dz \bigg|dx\\
    & \leq \iint |\nabla G_{h/2}(z)| \left| u_j^h(x)-u_j^h (x-z)\right| dz \,dx
    \end{align*}
    and used the pointwise estimates $|\nabla G_{h/2}(z)| \lesssim \frac1\h G_h(z)$ and $|u-v| \leq (1-u)v+u(1-v)$ for any $u,v\in[0,1]$. This gives indeed
    \[
     \int \left| \nabla G_{h/2}\ast u_j^h\right|dx   \lesssim  \frac2\h \int (1-u_j^h) \, G_h\ast u_j^h\,dx 
     \leq \frac1{\min_{i\neq j} \sigma_{ij}} E_h(u^h).
    \]
    
    Therefore, the left-hand side of \eqref{limit ddist} is indeed to leading order given by
  \[
   \frac2\h  \int \zeta \left( \xi \otimes \xi\right) \colon u^h \cdot \sigma ( h\nabla^2 G_h) \ast u^h\, dx.
  \]
  Then \eqref{limit ddist} follows from the convergence of the energies (cf.\ Lemma \ref{la conv ass for var int})
  and Proposition \ref{prop conv G K}.
  
  Using \eqref{limit of dE} for the numerator  and \eqref{limit ddist} for the denominator of the right-hand side of \eqref{lower bound dE} and $(\xi \cdot \nu_i)^2 \leq |\xi|^2$ along the way, we obtain by Fatou's Lemma in $t$
  \begin{align*}
    \liminf_{h\to 0}&\int_0^T |\partial E_h(\,\cdot\,,\chi^h;\zeta)|^2(u^h)\, dt\\
    &\geq \frac{c_0}2 \int_0^T \left( \sup_\xi \frac{ \int \nabla \left(\zeta\,  \xi \right) \colon\left( Id - \nu_i \otimes \nu_i\right) \sum_{i,j} \sigma_{ij}  \dS }
    {\sqrt{ \int \zeta\, |\xi|^2   \sum_{i,j} \sigma_{ij}  \dS}}
    \right)^2 dt.
  \end{align*}
  Applying this estimate for $u^h(h)=\chi^h(t+h)$ and $\zeta=1$ furnishes the existence of the mean curvature vector
  \[H\in L^2\left(\sum_{i,j}\sigma_{ij}\dS dt, \R^d\right)\]
  as claimed in the proposition. 
  Turning back to the interpolation $u^h$, we obtain the desired $\liminf$ inequality \eqref{H and dE} for the interpolations as well.
  \end{proof} 
 
%
  \bibliographystyle{plain}

\bibliography{lit}
  
 \end{document}